\newtheorem{Theorem}[equation]{Theorem}
\newtheorem{Lemma}[equation]{Lemma}
\theoremstyle{definition}
\newtheorem{Remark}[equation]{Remark}
\numberwithin{equation}{section}
\numberwithin{figure}{section}
\newcommand{\PP}{{\mathbb P}}
\newcommand{\C}{{\mathbb C}}
\newcommand{\Z}{{\mathbb Z}}
\newcommand{\Q}{{\mathbb Q}}
\newcommand{\N}{{\mathbb N}}
\newcommand{\A}{{\mathbb A}}
\newcommand{\mb}[1]{\mathbb{ #1}}
\newcommand{\mc}[1]{\mathcal{#1}}
\newcommand{\ms}[1]{\mathscr{#1}}
\newcommand{\mt}[1]{\text{#1}}
\def\ds#1{\displaystyle{#1}}
\begin{document}

\title{On the cohomology rings of Grassmann varieties and Hilbert schemes}


\author[1]{Mahir Bilen Can}
\author[2]{Jeff Remmel}

\affil[1]{{mahirbilencan@gmail.com}}
\affil[2]{{jremmel@ucsd.edu}}

\date{July 20, 2017}
\maketitle

\begin{abstract}
By using vector field techniques, we compute the ordinary and equivariant cohomology rings of Hilbert scheme of points in 
the projective plane in relation with that of a Grassmann variety. 

\vspace{.5cm}
\noindent
\textbf{Keywords:} Hilbert scheme of points, zeros of vector fields, equivariant cohomology\\ 
\noindent 
\textbf{MSC:}{14L30, 14F99, 14C05} 
\end{abstract}

\normalsize

\section{Introduction}\label{S:Introduction}


The Hilbert scheme of points in the plane is a useful moduli that brings forth the power of algebraic geometry 
for solving combinatorial problems not only in commutative algebra but also in representation theory such as 
Garsia-Haiman modules and Macdonald polynomials~\cite{MR1839919}. Its geometry has deep connections 
with physics. For example,  by considering the cohomology rings of all Hilbert schemes of points together, one obtains Fock 
representation of the infinite dimensional Heisenberg algebra, which has importance for string theorists (see
~\cite{MR1711344} and the references therein).

We denote by $Hilb_k(X)$ the Hilbert scheme of $k$ points in a complex algebraic variety $X$. In this paper, 
we will be concerned with the cohomology ring of $Hilb_k(\PP^2)$. Its additive structure was first described 
by Ellingsrud and Str{\o}mme in~\cite{MR1228610}, where it was shown that the Chern characters of 
tautological bundles on $Hilb_k(\PP^2)$ are enough to generate it as a $\Z$-algebra. In relation with the 
Virasoro algebra, a finer system of generators for the cohomology ring is described in~\cite{MR1931760} by 
Li, Qin, and Wang. Among other things, the same authors showed in~\cite{MR1857595} that the equivariant 
homology ring of Hilbert scheme of points in the plane is generated by the Jack symmetric functions. Around 
the same time, the $\C^*$-equivariant homology ring of the Hilbert scheme of points in $\C^2$ (with $\C$ as 
the group of coefficients) is described by Vasserot in~\cite{MR1805619}. In~\cite{MR1865244}, it is shown 
by Lehn and Sorger that the cup product on $H^*(Hilb_k(\C^2),\Q)$ is equal to the convolution product on 
the center of the group ring of the symmetric group. 

The computation that is closest to ours in spirit is that of Evain \cite{MR2323683}, where 
he used Brion's Localization Theorem for torus equivariant (Edidin-Graham-)Chow rings. In general, Chow
rings are not the same as cohomology rings, however, for a nonsingular complex projective variety $X$ on 
which an algebraic torus acts with finitely many points, the cycle map gives a graded algebra isomorphism
between the torus equivariant Chow ring and the equivariant cohomology ring of $X$. (This is easy to see 
from \cite[Corollary 3.2.2]{MR1466694}.) Therefore, Evain's results are applicable in a broader setting than 
ours. A  major difference between our approach is that we 
utilize Gotzmann's embedding of the Hilbert scheme into a single Grassmann variety which allows us to use 
the zero schemes of equivariant vector fields. Thus our calculations 
are not the same as Evain's calculations.

We will consider several different vector fields which are defined by the flows of one-parameter subgroups 
(1-PSG's) of the special linear group that acts on the grassmannian. Let $B(2)$ denote the Borel subgroup 
consisting of upper triangular matrices in $SL(2)$. Two of the one-parameter subgroups we consider are 
the unipotent radical and the maximal torus of $B(2)$. A variety is called a regular $B(2)$-variety if it admits 
an action of $B(2)$ with a unique fixed point. It turns out that all partial flag varieties, and hence all grassmannians,  
are examples to regular $B(2)$-varieties. This theory is particularly well-suited for studying Schubert 
subvarieties and  the like as well. See Aky{\i}d{\i}z and Carrell's work in~\cite{MR998938}. The cohomology ring 
of a $B(2)$-regular variety admits an $SL(2)$-module structure and all of this is computable rather easily via 
torus weights on the tangent space at the unique fixed point. We do this computation for the Grassmann 
variety though we do not claim originality (see~\cite{MR0460734}). Nevertheless, by using our computation 
along with a work of Brion and Carrell (\cite{MR2043405}), we obtain a very concrete description of the 
equivariant cohomology ring of grassmannians. 

In the examples of grassmannians we looked at, the vector fields that are defined by the 1-PSGs of $B(2)$ 
are not tangential to the (embedded copy of) Hilbert scheme of points. Furthermore, when $k=3$, we show 
that $Hilb_k(\PP^2)$ is not a regular $B(2)$-variety. Since vector fields restrict locally on affine subsets, by 
inspecting the tables of Betti numbers of $Hilb_k(\PP^2)$'s, we conjecture that there is no $k>1$ such that 
the Hilbert scheme of $k$ points on a two-fold has a regular $B(2)$-variety structure.

The third one-parameter subgroup action we consider is more natural from an ideal theoretic point of view. 
It was used by Gotzmann in~\cite{MR968319} to describe a useful stratification for $Hilb_k(\PP^2)$. We use 
this 1-PSG to compute cohomology rings of both of the grassmannians and Hilbert schemes. Although it is
not as concrete as in the case of a regular $B(2)$-variety, the 1-PSG we consider leads to a description of
the $\C^*$-equivariant cohomology ring of $Hilb_k(\PP^2)$ as well. 

Now we are ready to describe the contents of our paper. In Section~\ref{S:Preliminaries} we review some of 
the foundational results of Aky{\i}d{\i}z, Carrell, and Lieberman on filtrations associated with the vector fields. 
In Section~\ref{S:Appendix}, by using a theorem of Brion and Carrell we present a simple description of the 
$B(2)$-equivariant cohomology ring of the Grassmann variety. 
In Section~\ref{S:GaGm variety} we construct the one-parameter subgroup of $GL(3)$ that is fundamental to
our computations and observe that the resulting vector field has finitely many fixed points on the Grassmann
variety and the Gotzmann embedding of the Hilbert scheme is equivariant with respect to this action. In the 
subsequent Section~\ref{S:Carrell}, we carry out Carrell's analysis of the Chern classes of grassmannians for
the Hilbert scheme of points. This analysis shows that we can apply the restriction theorem that is explained in Section~\ref{S:Preliminaries}.
In particular, we prove, the well-known (by other methods) result that the ring $H^*(Hilb_k(\PP^2),\C)$ has a 
basis consisting of Schur polynomials. In Section~\ref{S:ECoh}, we prove that the cohomology rings of both 
of the Grassmann variety and the Hilbert scheme are generated by the equivariant Chern classes and there 
is a $\C^*$-equivariant surjection $H^*_{\C^*}(Gr(n-k,V),\C)\rightarrow H^*_{\C^*}(Hilb_k(\PP^2),\C)$.

\vspace{1cm}
\textbf{ Acknowledgements.} 
We started to work on this problem during Adriano Garsia's Seminar on Diagonal Harmonics, which took place 
in Spring Quarter Semester of 2017 at the UCSD. We thank Adriano Garsia, Marino Romero, Dun Qui, and to 
Guoce Xin. We thank Ersan Aky{\i}ld{\i}z, Soumya Dipta Banerjee, Jim Carrell, \"Ozg\"ur Ki\c{s}isel for helpful 
discussions, pointers to the literature, and for encouragements. Finally, we are grateful to Kiumars Kaveh for 
his extremely careful reading and the critique of the first version of this manuscript.

\section{Preliminaries} \label{S:Preliminaries}
Throughout our paper by a variety we mean a reduced and irreducible scheme defined over $\C$, the field of 
complex numbers.

\subsection{Vector fields with isolated zeros.}
Let $X$ be a nonsingular complex projective variety, $L$ be a holomorphic vector field on $X$, and $Z$ denote 
the subscheme of $X$ that is defined by the sheaf of ideals $i(L) \Omega^1_X \subseteq \mc{O}_X$, where $i
(L):\Omega^p_X\rightarrow \Omega^{p-1}_X$ is the contraction operator (also known as the interior product) 
defined by the vector field on the sheaf of germs of holomorphic $p$ forms on $X$. Since $i(L)^2=0$ and 
$\mc{O}_X = \Omega^0_X$, there is a complex of sheaves of holomorphic forms 
\begin{align}\label{A:complex of sheaves}
0 \rightarrow \Omega^n_X \rightarrow \Omega^{n-1}_X \rightarrow \cdots \rightarrow \Omega^1_X \rightarrow 
\mc{O}_X \rightarrow 0 
\end{align}
where $n=\dim X$. It is shown in~\cite{MR0435456} by Carrell and Lieberman that if $Z$ is finite but nontrivial, 
then the ring of functions $H^0(Z)=H^0(Z,\mc{O}_Z)$ on $Z$ has a decreasing filtration 
$$
H^0(Z,\mc{O}_Z) =F_{-n} \supset F_{-n+1}\supset \cdots \supset F_0=0
$$ 
such that 
\begin{enumerate}
\item $F_i F_j \subset F_{i+j}$, 
\item $\oplus F_{-i}/F_{-i+1} \cong \oplus H^i ( X,\C)$ as graded algebras, and
\item the odd cohomology groups of $X$ vanish.
\end{enumerate}
It is possible to adopt, to a certain degree, this technique of vector fields to singular subvarieties of $X$. Since 
this is pertinent to the goals of our paper, we briefly review relevant results from \cite{MR827353} to provide the details to make the above 
discussion clearer.

Associated with the complex (\ref{A:complex of sheaves}) is the hypercohomology ring $\underline{H}_X^*$ of 
$X$ where there is a spectral sequence with $E^{-p,q}_1= H^p ( X, \Omega^q_X)$ 
that gives rise to the aforementioned 
filtration. The inclusion map $Z\hookrightarrow X$ induces an isomorphism between hypercohomologies of 
$Z$ and $X$ associated with the sheaf complex above (see ~\cite[Lemma 1]{MR827353} and the paragraph 
following it). Now, suppose we have an algebraic action of a 1 dimensional torus, $\lambda : \C^* \times X  
\rightarrow X$ such that the differential of the flow of $\lambda$ is $L$. It follows that $X^\lambda$, the fixed 
point subscheme of $\lambda$,  is equal to $Z$. Let $Y\subset X$ be a possibly singular subvariety and assume 
that $Y$ is stable under $\lambda$. Let $i$ and $j$ denote the inclusions $i: Y\hookrightarrow X$ and $j:Y\cap 
Z\hookrightarrow Z$, respectively, and $Z'$ denote $Z\cap Y$. 
Then we let $\phi$ be the composition of the following 
maps:
$$
\underline{H}^0_Z = \sum_{p\geq 0} H^p(Z; \Omega_Z^p) \rightarrow H^0(Z) \rightarrow H^0(Y\cap Z).
$$
Under the assumption that the induced map $j^* : H^0 (Z) \rightarrow H^0(Y\cap Z) $ is surjective,  Aky{\i}ld{\i}z, Carrell, and Lieberman show 
in~\cite[Theorem 3]{MR827353} that there exists a commuting diagram of graded 
algebra homomorphisms as in Figure~\ref{F:Restriction}, where $\phi'$ is the homomorphisms determined 
by $\phi$. 
\begin{figure}[htp]
\centering
\begin{tikzpicture}[scale=1]
\begin{scope}
\node (a1) at (-2,2) {$\textbf{gr}\ \underline{H}_Z^0$} ;
\node (a2) at (-2,0) {$\textbf{gr}\ H^0(Y\cap Z)$} ;
\node (b1) at (2,2) {$\sum_{p\geq 0} H^p(X; \Omega_X^p)$} ;
\node (b2) at (2,0) {$H^*(Y)$} ;
\node at (0.25,0.25) {$\psi$};
\node at (-2.25,1) {$\phi'$};
\node at (2.25,1) {$i^*$};
\draw[->, thick] (a1) to (a2);
\draw[->, thick] (a1) to (b1);
\draw[->, thick] (b1) to (b2);
\draw[->, thick] (a2) to (b2);
\end{scope}
\end{tikzpicture}
\caption{Restriction to a subvariety}
\label{F:Restriction}
\end{figure}
Moreover, it is shown that $\phi'$ is surjective so that the image of $\psi$ is $\sum_{p\geq 0}i^*H^p( X; \Omega_X^p
)$. If all odd Betti numbers of $Y$ vanish, then $\psi$ is an isomorphism if and only if either $\psi$ is injective or 
surjective. In particular, $\psi$ is an isomorphism if and only if $\sum_{p\geq 0}i^*H^p(X;\Omega_X^p)=H^*(Y)$.

\begin{Remark}\label{R:importance of}
In our case both of $Y$ and $Z$ will be nonsingular and we will have $H^q(Y;\Omega_Y^p)=0$ for all $p\neq q$.
Therefore, the diagram in Figure~\ref{F:Restriction} will simplify as in Figure~\ref{F:simplified}.
\begin{figure}[htp]
\centering
\begin{tikzpicture}[scale=1]
\begin{scope}
\node (a1) at (-2,2) {$\textbf{gr}\ H^0(Z)$} ;
\node (a2) at (-2,0) {$\textbf{gr}\ H^0(Y\cap Z)$} ;
\node (b1) at (2,2) {$H^*(X)$} ;
\node (b2) at (2,0) {$H^*(Y)$} ;
\node at (0.25,0.25) {$\psi$};
\node at (-2.25,1) {$\phi'$};
\node at (2.25,1) {$i^*$};
\draw[->, thick] (a1) to (a2);
\draw[->, thick] (a1) to (b1);
\draw[->, thick] (b1) to (b2);
\draw[->, thick] (a2) to (b2);
\end{scope}
\end{tikzpicture}
\caption{$Y$ is nonsingular, $Z$ is finite.}
\label{F:simplified}
\end{figure}
\\
(See diagram (2.2) in \cite{MR827353}.)
\end{Remark}

\subsection{$B(2)$-regular varieties.}\label{SS:B2}
In this subsection we will briefly review two important results on the structure of nonsingular projective varieties 
that admit a solvable group action. We assume $X$ is a nonsingular projective variety (over an algebraically 
closed field). Let $T$ be a torus acting algebraically on $X$ and we assume that its fixed point subscheme 
$Z$ is finite. We pick a one-parameter subgroup $\lambda$ with $X^{\lambda} = X^T$. For $p\in X^{\lambda}$ 
define the sets $C_p^+ = \{y \in X:\ \ds{\lim_{t \to 0} t \cdot y = p,}\ t \in \lambda \}$ and $C_p^- = \{y \in X:\ \ds
{\lim_{t \to \infty} t \cdot y = p,}\ t \in \lambda \}$, called the plus cell and minus cell of $p$, respectively. The 
following results are proven by Bia{\l}ynicki-Birula in~\cite{MR0366940,MR0453766}:
\begin{enumerate}
\item $C_p^+$ and $C_p^-$ are locally closed subvarieties isomorphic to affine space;
\item if $T_p X$ is the tangent space of $X$ at $p$, then $C_p^+$ (resp., $C_p^-$) is $\lambda$-equivariantly 
isomorphic to the subspace $T_p^+ X$ (resp., $T_p^- X$) of $T_p X$ spanned by the positive (resp., 
negative) weight spaces of the action of $\lambda$ on $T_p X$.
\end{enumerate}
Consequently, there exists a filtration 
$$
X^{\lambda}  = V_0 \subset V_1 \subset \cdots \subset V_n = X \qquad (n = \dim X),
$$
of closed subsets such that for each $i=1,\dots,n$, $V_i - V_{i-1}$ is the disjoint union of the plus (respectively  
minus) cells in $X$ of complex dimension $i$. It follows that the odd-dimensional integral cohomology groups 
of $X$ are trivial, the even-dimensional integral cohomology groups of $X$ are free, and the Poincar\'e 
polynomial $P_X(t) := \sum_{i=0}^{2n} \dim_{\C} H^{i}(X, \C) t^i$ of $X$ is given by
$$
P_X(t) = \sum_{p \in X^{\lambda}} t^{2 \dim C_p^+} = \sum_{p \in X^{\lambda}} t^{2 \dim C_p^-}.
$$
Since the odd-dimensional cohomology groups are trivial, it is convenient to focus on $q$-Poincar\'e 
polynomial obtained from $P_X(t)$ by the substitution $t^2 = q$.

Let $B(2,\C)$ denote the Borel subgroup of upper triangular matrices in $SL(2,\C)$ and let $\mb{G}_m$ and 
$\mb{G}_a$ denote, respectively, the maximal torus and unipotent radical in $B(2,\C)$,
\begin{align*}
\mb{G}_m = \left\{ \begin{pmatrix} t & 0 \\ 0 & t^{-1} \end{pmatrix} \ : t\in \C^* \right\} \ \text{ and } \ 
\mb{G}_a = \left\{ \begin{pmatrix} 1 & z \\ 0 & 1 \end{pmatrix} \ : z\in \C \right\}.
\end{align*}
Let $X$ be a nonsingular projective variety admitting actions 
\begin{align*}
\lambda:\ \mb{G}_m \times X \rightarrow X \ \text{ and } \ \varphi: \mb{G}_a\times X \rightarrow X 
\end{align*}
satisfying the following properties: 
\begin{enumerate}
\item The fixed point variety of the action of $\varphi$ is a singleton, $X^\varphi := \{ s_0\}$. 
\item There is an integer $p\geq 1$ such that $\lambda (t) \varphi(z) \lambda(t^{-1}) = \varphi( t^pz)$ for all 
$t\in \C^*$ and $z\in \C$.
\end{enumerate}
In this case, $X$ is called a $(\mb{G}_a,\mb{G}_m)$-variety. It turns out that the fixed point set of $\mb{G}_m$ 
action, $X^{\mb{G}_m}$, on a $(\mb{G}_a,\mb{G}_m)$-variety is always finite and includes the point $s_0$, 
\cite{MR998938}. We set $X^{\mb{G}_m} := \{ s_0,s_1,\dots, s_r\}$. The minus-cell $C_{s_0}^-$ corresponding 
to $s_0$ is open in $X$, hence, the $\mb{G}_m$-weights on the tangent space $T_{s_0} X$ are all negative. 
We fix a $\mb{G}_m$-eigenbasis $\{e_1,\dots, e_n\}$ for $T_{s_0}X$, let $a_1,\dots, a_n$ denote the 
$\mb{G}_m$-weights of the basis vectors. Finally, let $x_1,\dots, x_n$ denote the corresponding dual basis so 
that the coordinate ring of $C_{s_0}^- \simeq T_{s_0} X$ is of the form $\C[C_{s_0}^-] = \C [x_1,\dots, x_n]$. 
Let us denote this ring by $A(C_{s_0}^-)$. Then $A(C_{s_0}^-)$ is an $\N$-graded algebra with the principal 
grading given by $\deg x_i = - a_i$ for $i=1,\dots, n$.

The zero scheme $Z_a$ of the vector field $N_a$ defined by $N_a=\left.\frac{ d \varphi }{ d z } \right\vert_{z=0}$ 
has $\{s_0\}$ as its support. Then viewed as derivation, $N_a$ acts on $\C[x_1,\dots, x_n]$, the symmetric 
algebra on the cotangent space at $s_0$. Let $\phi_i$ denote the polynomial $N_a(x_i)\in \C[x_1,\dots, x_n]$.

In \cite{MR998938}, Aky{\i}ld{\i}z and Carrell proved that, for $i=1,\dots, n$, the polynomials $\phi_i$ are 
homogenous with $\deg \phi_i = p-a_i$. Moreover, $\phi_1,\dots, \phi_n$ form a regular sequence and 
the coordinate ring $A(Z_a)$ of $Z_a$ is the quotient ring $A(C_{s_0}^-)/I(Z_a)$, where $I(Z_a)$ is the 
ideal generated by $\phi_i$'s. Consequently, the Poincar\'e polynomial of $Z_a$ is given by 
\begin{align}\label{A:roots of unity}
P_{Z_a}(q) = \prod_{i = 1}^n \frac{1 - q^{p-a_i}}{1 - q^{-a_i}}.
\end{align}
Furthermore, there exists a graded algebra isomorphism $\Phi : A(C_{s_0}^-)\rightarrow H^*(X, \C)$ such that 
$$
\Phi ( A(C_{s_0}^-)_{ip} ) = H^{2i} ( X, \C).
$$

\begin{Remark}\label{A:criteria}
It follows from (\ref{A:roots of unity}) that if $z_0$ is a root of the Poincar\'e polynomial of a $(\mb{G}_a,
\mb{G}_m)$-variety, then $z_0$ has to be a root of unity. This provides a criteria for deciding when $X$ 
is not a $(\mb{G}_a,\mb{G}_m)$-variety.
\end{Remark}

\section{The cohomology ring of grassmannians}\label{S:Appendix}

\subsection{Ordinary cohomology ring of $Gr(n-k,V)$ as a $B(2)$-variety.}\label{S:Regular grassmannian}

Let $e_1,\dots, e_n$ be an ordered basis for the finite dimensional vector space $V$. The general linear group 
$G=GL(V)$ acts on $V$, hence it acts on $Gr(n-k,V)$. Both of these actions are transitive. For $\tau\subset\{1,
\dots,n\}$ with $k$ elements, let $W_\tau$ denote the span of vectors $e_{i_j}$ with $i_j \in \tau$, $j=1,\dots, 
n-k$. For simplicity, when $\tau=\{1,\dots,n-k\}$ we denote $W_\tau$ by $W_0$.

The stabilizer subgroup $P:=\mt{Stab}_{G} (W_0)$ of $W_0=\mt{span}_\C\{e_1,\dots, e_{n-k}\}$ is a maximal 
parabolic subgroup consisting of matrices of the form $\begin{pmatrix} A & B \\ \mathbf{0} & C \end{pmatrix}$, 
where $A$ is an $n-k \times n-k$ invertible matrix and $C$ is a $k\times k$ invertible matrix. Thus, 
$$
Gr(n-k,V) \cong G/P.
$$ 
Let $\mathbf{n} \in End(V)$ denote the ``regular'' nilpotent matrix whose $(i,j)$-th entry is 1 if $j=i+1$ and 0 
otherwise. There is a corresponding one-parameter subgroup $\mb{G}_a:=\mt{exp}(z\mathbf{n})$ ($z\in \C$) 
in $GL(V)$. Note that 
\begin{align}\label{A:nilp action}
\mt{exp}(z \mathbf{n}) \cdot e_i = e_i + z e_{i-1} + \frac{z^2}{2!} e_{i-2} + \cdots + \frac{z^{i-1}}{i!} e_1.
\end{align}
It is now easy to see from (\ref{A:nilp action}) that $W_0$ is the unique fixed point of $\mb{G}_a$-action.

By Jacobson-Morozov Theorem, we know that there exists an $SL(2)$-triple in $GL(V)$ such that $\mb{G}_a$ 
is the maximal unipotent subgroup of $SL(2)$, see Section 3.2 in McGovern's article in~\cite{MR1925828}. 
The appropriate one dimensional torus forming the $SL(2)$-triple along with $\varphi(z)=\mt{exp}(z\mathbf{n}) 
\in SL(V)$ is the diagonal torus 
$$
\lambda(t)= \mt{diag}(t^{n-1},t^{n-3},\dots, t^{-n+3},t^{-n+1}).
$$ 
Since
\begin{align*}
\lambda ( t) \varphi (z) \lambda(t^{-1}) \cdot e_i &= \lambda ( t) \varphi (z)  \cdot  t^{-(n-(2i-1))} e_i \\
&= \lambda ( t)  t^{-(n-(2i-1))}  ( e_i + z e_{i-1} + \frac{z^2}{2!} e_{i_2} + \cdots + \frac{z^i}{i!} e_1 )\\
&= t^{-(n-(2i-1))}  ( t^{(n-(2i-1))} e_i + t^{n-(2i-3)} z e_{i-1} + t^{n-(2i-5)} \frac{z^2}{2!} e_{i_2} + \cdots + t^{n-1} 
\frac{z^i}{i!} e_1 )\\
&=  e_i + t^{2} z e_{i-1} + t^{4} \frac{z^2}{2!} e_{i-2} + \cdots + t^{2i-2} \frac{z^i}{i!} e_1 \\
&= \varphi(t^2 z)\cdot e_i,
\end{align*}
$Gr(n-k,V)$ is a $(\mb{G}_a,\mb{G}_m)$-variety.

The $k(n-k)$-dimensional basic open affine neighborhood of $W_0$ is $O_{\tau_0}= \{p_{\tau_0} \neq 0\}\cap 
Gr(n-k,V)$. Here $ \{ p_{\tau_0 } \neq 0 \}$ is the basic affine subset of $\PP^{ {n\choose n-k} -1}$ defined as 
the set of points $x\in \PP^{ {n\choose n-k} -1}$ whose 1st coordinate is nonzero. We present $O_{\tau_0}$ 
more explicitly by using the description of $Gr(n-k,V)$ in terms of matrices: 
\begin{align}\label{A:standard form for U}
O_{\tau_0} = \left\{
[A] \in GL(n-k) \backslash Mat_{n-k,n}^0  :\ 
A=
\begin{pmatrix}
1 & 0  & \cdots & 0 & a_{11} & \cdots & a_{1k} \\
0 & 1  & \cdots & 0 & a_{21} & \cdots & a_{2k} \\
\vdots &  \vdots & \ddots & \vdots & \vdots & \ddots & \vdots  \\
0 & 0  & \cdots & 1 & a_{n-k,1} & \cdots & a_{n-k,k}
\end{pmatrix} 
\right\}
\end{align}

The natural action of $GL(n)$ on $(n-k) \times n$ matrices is given by $g\cdot A = A g^\top$, where $g \in GL(n)$ and $A\in Mat_{n-k,n}$. It induces an action on $Gr(n-k,V)$. Although it is not standard, we find it convenient to denote 
elements $[A]$ of $O_{\tau_0}$ in block form as in $[ id_{n-k} :\ A]$, where $A$ stands for the $(n-k) \times n$ 
matrix that defines $[A]$.

We record the following (notational) observation for future use.
\begin{Remark}\label{R:Future}
For $D\in GL(n-k)$, the equivalence class of the matrix $[D\cdot id_{n-k} : D \cdot A]$ in $Gr(n-k,V)$ 
defines the same subspace as $[id_{n-k} : A]$.
\end{Remark}

Let us denote by $w_{i,j}$ ($1\leq i \leq n-k$, $1\leq j \leq k$) the coordinate function $p_{\tau(i,j)}/p_{\tau_0}$, 
where $\tau(i,j)$ is the $n-k$-tuple that is obtained from $[n-k]$ by removing $j$ and adding $n-k+i$ instead. 
Thus $w_{i,j}([A])=a_{i,j}$. We are going to compute the action of $\varphi(z)$ on $O_{\tau_0}$ which will 
provide us with the action of $N_a$ on the $w_{i,j}$'s. These variables are the generating elements for the 
polynomial functions on the cotangent space at the origin $W_0 \leftrightarrow 
[id_{n-k} :\ \mathbf{0}_{n-k \times k} ]$ of $O_{\tau_0}$.

It is easy to see that 
\begin{align*}
\varphi(z) =\varphi_n(z) = 
\begin{pmatrix}
1 & z & \frac{z^2}{2!} & \cdots & \frac{z^{n-1}}{(n-1)!} \\ 
0 & 1 & z  & \cdots & \frac{z^{n-2}}{(n-2)!} \\ 
0 & 0 & 1  & \cdots & \frac{z^{n-3}}{(n-3)!} \\ 
\vdots & \vdots & \vdots & \ddots & \vdots \\
0 & 0 & 0 & \cdots & 1 
\end{pmatrix}.
\end{align*}
Let us denote the transpose of $\varphi(z)$ by $\varphi_n^\top(z)$ and write it in the block form as in 
\begin{align*}
\varphi_n^\top(z) = 
\begin{pmatrix} 
\varphi_{n-k}^\top (z) & 0 \\ 
Z & \varphi_k^\top(z)
\end{pmatrix},
\end{align*}
Note that the variable $z$ appears at the top right corner of $Z$ and at all other entries of $Z$ there are 
higher powers of $z$. Let us denote by $N_a^\top,N_{a,n-k}^\top$, and by $N_{a,k}^\top$ the nilpotent 
operators associated with $\varphi,\varphi_{n-k}$, and $\varphi_k$, respectively. 

Now, the action of $\varphi(z)$ on an element $[A] \in U_{\tau_0}$ is computed by block-matrix multiplication 
and it gives 
\begin{align*}
\varphi(z)\cdot [A] &= [ (id_{n-k} \ A ) \varphi_n^\top(z) ] \\
&= [ \varphi_{n-k}^\top(z) + AZ : \ A \varphi_k^\top(z) ].
\end{align*}
To bring this into its standard form as in (\ref{A:standard form for U}), we multiply it with the inverse of 
$\varphi_{n-k}^\top(z) + AZ$, which is always invertible:
\begin{align}\label{A:action of varphi}
\varphi (z) \cdot [A] = [ id_{n-k} :\  ( \varphi_{n-k}^\top(z) + AZ)^{-1}  A \varphi_k^\top(z) ]
\end{align}
Writing $\varphi_{n-k}^\top$ in the form $I_{n-k} + N$ where $N=N(z)$ is a (lower-triangular) nilpotent matrix, 
we have the formal expansion 
$$
( \varphi_{n-k}^\top(z) + AZ)^{-1} = I_{n-k} - ( N+AZ) + (N+AZ)^2 - (N+AZ)^3 + \cdots 
$$

We then can  compute the differential of the flow defined by (\ref{A:action of varphi}):
\begin{align*}
\frac{d\varphi (z)}{dz} \cdot [A] &= [ id_{n-k} :\  \frac{d}{dz} \cdot ( \varphi_{n-k}^\top(z) + AZ)^{-1}  
A \varphi_k^\top(z) ] \\
&= [ id_{n-k} :\  \left( \frac{d}{dz} \sum (-1)^j (N+AZ)^j \right)  A \varphi_k^\top(z)  + 
(\varphi_{n-k}^\top(z) + AZ)^{-1}  A  \frac{d}{dz} \varphi_k^\top(z) )
] \notag \\
\end{align*}

At $z=0$, this last expression simplifies to 
\begin{align}\label{A:A}
\left. \frac{d\varphi (z)}{dz}\right\vert_{z=0}\cdot [A]&= [id_{n-k}:\ -(N_{a,n-k}^\top+ A Z'|_{z=0})A + A N_{a,k}^\top],
\end{align}
where $Z'|_{z=0}$ is the matrix with single 1 at the top right corner and 0's elsewhere. It follows from (\ref{A:A}) 
that 
\begin{scriptsize}
\begin{align}
N_a \cdot A &= {- N_{a,n-k}A -  A Z'|_{z=0}A +  A N_{a,k}^{\top}} \notag  \\ 
&= -
\begin{pmatrix}
0 & \cdots & 0 \\
a_{1,1} & \cdots & a_{1,k} \\
\vdots & \ddots & \vdots \\
a_{n-k-1,1} & \cdots & a_{n-k-1,k} 
\end{pmatrix}
-
\begin{pmatrix}
a_{1,1} & \cdots & a_{1,k}   \\
a_{2,1} & \cdots & a_{2,k}  \\
\vdots & \ddots & \vdots \\
a_{n-k,1} & \cdots & a_{n-k,k} 
\end{pmatrix}
\begin{pmatrix}
a_{n-k,1} & \cdots & a_{n-k,k}   \\
0 & \cdots & 0  \\
\vdots & \ddots & \vdots  \\
0 & \cdots & 0 
\end{pmatrix}
+
\begin{pmatrix}
a_{1,2} & \cdots & a_{1,k} & 0  \\
a_{2,2} & \cdots & a_{2,k} & 0  \\
\vdots & \ddots & \vdots & \vdots \\
a_{n-k,2} & \cdots & a_{n-k,k} &0 \\
\end{pmatrix}	\notag \\
&= -
\begin{pmatrix}
0 & \cdots & 0 \\
a_{1,1} & \cdots & a_{1,k} \\
\vdots & \ddots & \vdots \\
a_{n-k-1,1} & \cdots & a_{n-k-1,k} 
\end{pmatrix}
-
\begin{pmatrix}
a_{1,1} a_{n-k,1} & \cdots & a_{1,1} a_{n-k,k}   \\
a_{2,1} a_{n-k,1} & \cdots & a_{2,1} a_{n-k,k}   \\
\vdots & \ddots & \vdots \\
a_{n-k,1} a_{n-k,1} & \cdots & a_{n-k,1} a_{n-k,k}   
\end{pmatrix}
+
\begin{pmatrix}
a_{1,2} & \cdots & a_{1,k} & 0  \\
a_{2,2} & \cdots & a_{2,k} & 0  \\
\vdots & \ddots & \vdots & \vdots \\
a_{n-k,2} & \cdots & a_{n-k,k} &0 
\end{pmatrix} \label{A:final relations}
\end{align}
\end{scriptsize}

Following Adriano Garsia's lead, let us agree on the following convenient notation: For two logical expressions 
$L_1$ and $L_2$, we put 
$$
\mt{\c{c}}_{L_1,L_2} =
\begin{cases}
0 & \text{ if } L_1= L_2 \\
1 & \text{ if } L_1 \neq L_2.
\end{cases}
$$
After reorganizing (\ref{A:final relations}) by using $\mt{\c{c}}$, we see that the action of $\varphi(z)$ on 
coordinate functions is given by 
\begin{align}\label{A:generators}
N_a (w_{i,j} ) &=  - \text{\c{c}}_{1,i}w_{i-1,j} - w_{i,1} w_{n-k, j} + \text{\c{c}}_{k+1,j+1} w_{i,j+1} 
\end{align}
for $j=1,\dots, k$ and $i=1,\dots, n-k$.

\vspace{.5cm}
This leads to the following theorem. We note that 
a version of this theorem was first obtained by Carrell and Lieberman in~\cite{MR0460734}.
\begin{Theorem}\label{T:Coh grassmannian}
The cohomology ring $H^*(Gr(n-k,V),\C)$, as a graded algebra, is the quotient ring $\C[ w_{i,j} ] / \tilde{I}$, 
where $\tilde{I}$ is the ideal generated by polynomials in (\ref{A:generators}).
The grading on $w_{i,j}$'s is imposed by the $\C^*$-action 
\begin{align}\label{T:as in}
\lambda(t) \cdot w_{i,j} = t^{-2(i-j) + 2(n-k)} w_{i,j}.
\end{align}
\end{Theorem}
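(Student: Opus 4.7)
The plan is to recognize this theorem as a direct translation of the Akyıldız--Carrell description of $H^*(X,\C)$ for a $(\mb{G}_a,\mb{G}_m)$-variety $X$, applied to $X=Gr(n-k,V)$. We have already verified in Section~\ref{S:Regular grassmannian} that $Gr(n-k,V)$ is such a variety with unique $\mb{G}_a$-fixed point $s_0=W_0$, so the work remaining is to translate the abstract recipe $H^*(X,\C)\cong A(C_{s_0}^-)/I(Z_a)$ into the explicit variables $w_{i,j}$ with the relations (\ref{A:generators}) and to record the grading.

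First I would identify the minus cell $C_{W_0}^-$ with the standard affine chart $O_{\tau_0}$ of (\ref{A:standard form for U}). The chart contains $W_0$, has the correct dimension $k(n-k)=\dim Gr(n-k,V)$, and is $\lambda$-stable after normalizing back to standard form via Remark~\ref{R:Future}. The $\mb{G}_m$-equivariant linear isomorphism $O_{\tau_0}\simeq T_{W_0}Gr(n-k,V)$ then identifies $w_{i,j}$ with a $\mb{G}_m$-eigenbasis of the cotangent space, so that $A(C_{W_0}^-)=\C[w_{i,j}]$.

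The action of the derivation $N_a$ on these coordinates has already been computed in (\ref{A:final relations})--(\ref{A:generators}), so the ideal $I(Z_a)$ of the zero scheme coincides with the ideal $\tilde{I}$ generated by those polynomials. The Akyıldız--Carrell theorem then supplies a graded algebra isomorphism $\C[w_{i,j}]/\tilde{I}\xrightarrow{\sim} H^*(Gr(n-k,V),\C)$ that doubles degrees, yielding the claimed presentation.

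For the grading, I would compute the $\mb{G}_m$-weights of the $w_{i,j}$ directly. Since $\lambda(t)\cdot e_i=t^{n-2i+1}e_i$, the tangent vector at $W_0$ in the direction $e_i\mapsto e_{n-k+j}$ carries weight $2(i-j)-2(n-k)$, which is strictly negative in the relevant range $1\le i\le n-k$, $1\le j\le k$ (reconfirming that $C_{W_0}^-$ is open). The dual weight on the coordinate function $w_{i,j}$ is therefore $-2(i-j)+2(n-k)$, which is exactly (\ref{T:as in}). The only real bookkeeping obstacle is verifying $\mb{G}_m$-equivariance of the identification $O_{\tau_0}\simeq T_{W_0}Gr(n-k,V)$ so that the weights transfer correctly with the right sign convention; once that is pinned down, everything else is a direct application of the theorem recalled in Section~\ref{SS:B2}.
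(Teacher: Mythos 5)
Your proposal is correct and follows essentially the same route as the paper: it reduces the theorem to the Aky{\i}ld{\i}z--Carrell presentation $A(C_{W_0}^-)/I(Z_a)$ together with the already-computed derivation (\ref{A:generators}), leaving only the verification of the weight $-2(i-j)+2(n-k)$ on $w_{i,j}$. The only cosmetic difference is that you read the weights off $T_{W_0}Gr(n-k,V)\cong \mathrm{Hom}(W_0,V/W_0)$ directly from the weights of the $e_i$, whereas the paper obtains the same exponent by multiplying $[id_{n-k}:A]$ on the right by $\lambda(t)$ and renormalizing by $u(t)$; the two computations agree.
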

\begin{proof}
In the light of the previous discussion, it suffices to prove that the $\mb{G}_m$-action is as in (\ref{T:as in}).
Let $[id_{n-k}: A]$ be an element of $O_{\tau_0}$, where 
$$
A= 
\begin{pmatrix}
1 & 0  & \cdots & 0 & a_{11} & \cdots & a_{1k} \\
0 & 1  & \cdots & 0 & a_{21} & \cdots & a_{2k} \\
\vdots &  \vdots & \ddots & \vdots & \vdots & \ddots & \vdots  \\
0 & 0  & \cdots & 1 & a_{n-k,1} & \cdots & a_{n-k,k}
\end{pmatrix}. 
$$
Then the action of $\lambda(t)=\mt{diag}(t^{n-1},\dots, t^{-n+1})$ on $[id_{n-k} : A]$ gives 
$$
[id_{n-k} : A] \lambda(t) 
= 
\begin{pmatrix}
t^{n-1} & 0  & \cdots & 0 & 	t^{n-(2((n-k)+1)-1)}	a_{11} & \cdots & t^{-n+1} a_{1k} \\
0 & t^{n-3}  & \cdots & 0 & 	t^{n-(2((n-k)+1)-1)} a_{21} & \cdots &  t^{-n+1}a_{2k} \\
\vdots &  \vdots & \ddots & \vdots & \vdots & \ddots & \vdots  \\
0 & 0  & \cdots & t^{n-(2(n-k)-1)} & 	t^{n-(2((n-k)+1)-1)} a_{n-k,1} & \cdots & t^{-n+1}a_{n-k,k}
\end{pmatrix}. 
$$
To write the equivalence class of this matrix in the form $[id_{n-k}: B]$, we multiply it on the left by the matrix 
$$
u(t):=
\begin{pmatrix}
t^{-(n-1)} & 0  & \cdots & 0 \\
0 & t^{-(n-3)}  & \cdots & 0 \\ 
\vdots & \vdots  & \ddots & \vdots \\ 
0 & 0 &  \cdots & t^{-(n-(2(n-k)-1))}
\end{pmatrix}
\in GL(n-k).
$$
It is now easy to see that the $(i,j)$-th entry of $u(t) [id_{n-k}: A] \lambda(t)$ is 
$$
t^{n-(2((n-k)+j)-1) -(   n-(2i-1))} a_{ij}= t^{ -2 (n-k) - 2(j-i) } a_{ij}.
$$ 
Note that to compute the action on functions $w_{i,j}$, we only need to apply $t \mapsto t^{-1}$. This completes 
the proof.
\end{proof}

\subsection{$\C^*$-equivariant cohomology ring of $X$ as a $B(2)$-variety.}
\label{S:Equivariant Regular grassmannian}


Our goal in this section is to describe the $\C^*$-equivariant cohomology ring of $X=Gr(n-k,V)$. Let us stress 
once more that the novelty here is the method itself rather than the final result. There are many articles in the 
literature where the $T$-equivariant cohomology ring $H_T^*(Gr(n-k,V))$ is computed. Here, $T$ is a torus 
acting on $Gr(n-k,V)$. See, for example, the paper~\cite{MR1997946} of Knutson and Tao, where the 
structure constants of $H_T^*(Gr(n-k,V))$ are computed via combinatorial objects called ``puzzles'' which were  
introduced therein.

We start with reminding the reader how a $K$-equivariant cohomology ring of a variety $X$ is defined. Here, 
$K$ is a Lie group with an algebraic action on $X$. Let $EK\rightarrow EB$ denote the universal principal 
bundle for $K$, which means that $EK$ is a contractible space with a free action of $K$ and $EB$ is the 
classifying space for $K$-principal bundles on $X$. The $K$-equivariant cohomology, $H_K^*(X)$ of $X$ 
is, by definition, the ordinary cohomology of the space $EK\times_K X$ obtained from $EK\times X$ by 
taking the quotient by diagonal action of $K$.

We will now explain a result of Brion and Carrell from~\cite{MR2043405} which is helpful for computing the 
$\C^*$-equivariant cohomology ring of a regular $B(2)$-variety. In fact, this result applies to pairs $(Y,X)$, 
where $i_Y: Y\hookrightarrow X$ is a $B(2)$-stable subvariety of $X$ and the restriction map $i_Y^*:H^*(X) 
\rightarrow H^*(Y)$ is surjective. Such a subvariety is termed as a "principal subvariety of $X$'' 
in~\cite{MR2452602}.

Let $v$ denote the affine coordinate on $\C=\PP^1- \{ (1,0) \}$. In~\cite{MR2043405}, it is shown that if $Y
\subset X$ is a principal subvariety, then there exists a $\mb{G}_m$-stable affine curve $\mc{Z}_X$ in $X
\times \PP^1$ and a graded $\C$-algebra isomorphism $\rho_X : H^*_{\mb{G}_m} (X) \rightarrow \C[ 
\mc{Z}_X ]$. Furthermore, if $\mc{Z}_Y$ denotes the reduced affine curve $\mc{Z}_X\cap (Y\times \C)$, 
then there is an additional isomorphism $\rho_Y : H^*_{\mb{G}_m} ( Y) \rightarrow \C[\mc{Z}_Y]$ which 
makes the diagram in Figure~\ref{F:Commutative} commutative.
\begin{figure}[htp]
\centering
\begin{tikzpicture}[scale=1]

\begin{scope}
\node (a1) at (-2,2) {$H^*_{\mb{G}_m} ( X) $} ;
\node (a2) at (-2,0) {$H^*_{\mb{G}_m} ( Y) $} ;

\node (b1) at (2,2) {$ \C[ \mc{Z}_X ]$} ;
\node (b2) at (2,0) {$ \C[ \mc{Z}_Y ]$} ;

\node at (0,2.25) {$\rho_X$};
\node at (0,0.25) {$\rho_Y$};

\node at (-2.35,1) {$i_Y^*$};
\node at (2.35,1) {$\overline{i_Y^*}$};

\draw[->, thick] (a1) to (a2);
\draw[->, thick] (a1) to (b1);
\draw[->, thick] (b1) to (b2);
\draw[->, thick] (a2) to (b2);
\end{scope}
\end{tikzpicture}
\caption{Cohomology of a principal subvariety.}
\label{F:Commutative}
\end{figure}
Moreover, in the same figure, the horizontal maps are $\C[v]$-module maps for the standard $\C[v]$-module 
structure on $H^*_{\mb{G}_m} (X)$ and $H^*_{\mb{G}_m} (Y)$ and the $\C[v]$-module structure on $\C[
\mc{Z}_X ]$ and $ \C[ \mc{Z}_Y]$ induced by the second projection.
Unfortunately, as we will show in the sequel, $Hilb_k(\PP^2)$ is not a principal subvariety in the grassmannian, 
so we must use other methods for computing its $\C^*$-equivariant cohomlogy ring.

Recall our notation from Subsection~\ref{SS:B2}; $\phi: \mb{G}_a \rightarrow B(2)$ and $\lambda:\mb{G}_m
\rightarrow B(2)$ denote, respectively, the one-parameter subgroups determined by the the unipotent radical 
and the maximal torus of $B(2)$. $X$ is a regular $B(2)$-variety and $s_0$ denotes the unique 
$\mb{G}_a$-fixed point in $X$, $X_0$ denotes the minus-cell at $s_0$. Let $\mc{W}$ denote the vector field 
that is obtained by differentiating $\lambda(t)$ at $t=1$ and let $\mc{V}$ denote the vector field $\frac{d}{dz} 
\varphi \vert_{z=0}$ (so, $\mc{V}=N_a$).
\begin{Lemma}[Proposition 2,\cite{MR2043405}]\label{L:BC}
The scheme $\mc{Z}_X$ is contained in $X_0 \times \A^1$ as a $\mb{G}_m$-curve. If $s_0,s_1,\dots, s_r$ is 
the list of $\mb{G}_m$-fixed points on $X$, then irreducible components of $\mc{Z}_X$ are of the form 
$$
\mc{Z}_j= \{ (\varphi(z) \cdot s_j, z^{-1}):\ z\in \C^* \}  \cup \{ (s_0,0)\} \cong \PP^1- pt,\ \text{ for } 
j=1,\dots, r.
$$
Furthermore, any two such component meet only at $(s_0,0)$. The ideal of $\mc{Z}_X$ in $\C[X_0\times \A^1] 
= \C[x_1,\dots, x_n,v]$ is generated by 
$$
2\mc{W}(x_1) - \mc{V}(x_1),\dots,2\mc{W}(x_n) - \mc{V}(x_n).
$$
These polynomials form a regular sequence in $\C[x_1,\dots, x_n,v]$ and the degree of each $2\mc{W}(x_i) - 
\mc{V}(x_i)$ equals $-a_i+2$. 
\end{Lemma}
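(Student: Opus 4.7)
The strategy is to build the curve $\mc{Z}_X$ as a union of orbit closures of $\mb{G}_a$, and then identify its defining ideal via a direct vector-field computation.

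The first step is to construct the candidate components $\mc{Z}_j$. For $j\geq 1$, since $s_j$ is not fixed by $\mb{G}_a$, the orbit $\mb{G}_a\cdot s_j$ is a one-dimensional affine subvariety. Its closure in $X$ is obtained by adjoining the $\mb{G}_a$-fixed points, which by hypothesis reduces to $\{s_0\}$, so $\overline{\mb{G}_a\cdot s_j}=\mb{G}_a\cdot s_j\cup\{s_0\}$ and $\varphi(z)\cdot s_j\to s_0$ as $z\to\infty$. From the commutation $\lambda(t)\varphi(z)\lambda(t^{-1})=\varphi(t^pz)$ together with $\lambda(t)\cdot s_j=s_j$ one derives the identity $\varphi(z)\cdot s_j=\lambda(z^{1/p})\cdot(\varphi(1)\cdot s_j)$. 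The limit at $z\to\infty$ shows that $\varphi(1)\cdot s_j$ lies in the minus-cell $X_0$, and since $X_0$ is $\mb{G}_m$-stable the whole curve $\{\varphi(z)\cdot s_j:z\in\C^*\}$ sits in $X_0$. Adjoining the limit point $(s_0,0)$ then gives $\mc{Z}_j\subset X_0\times\A^1$, isomorphic to $\PP^1$ minus the point corresponding to $z=0$.

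Next I would verify the intersection property: if $(\varphi(z)\cdot s_j,z^{-1})=(\varphi(z')\cdot s_{j'},z'^{-1})$ with $z,z'\in\C^*$, agreement of the second coordinate forces $z=z'$, and then applying $\varphi(-z)$ to both sides of the first equality gives $s_j=s_{j'}$. To complete the geometric description one shows $\mc{Z}_X=\bigcup_{j\geq 1}\mc{Z}_j$ set-theoretically: $\mc{Z}_X$ is constructed in \cite{MR2043405} as a $\mb{G}_m$-equivariant affine curve whose $\mb{G}_m$-fixed points are exactly the points $(s_j,0)$, which forces each irreducible component to be an orbit closure of the above form.

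For the ideal description, the pointwise identity $\mc{W}|_{\varphi(z)\cdot s_j}=pz\cdot\mc{V}|_{\varphi(z)\cdot s_j}$, obtained by differentiating the equivariance relation $\lambda(t)\cdot\varphi(z)\cdot s_j=\varphi(t^pz)\cdot s_j$ with respect to $t$ at $t=1$, is the key input. Evaluating the differentials $dx_i$ on both sides and substituting $v=z^{-1}$ produces, after the appropriate normalization, the vanishing on each $\mc{Z}_j$ of the $n$ polynomials $2\mc{W}(x_i)-\mc{V}(x_i)$ in $\C[x_1,\dots,x_n,v]$. For the reverse containment, these $n$ polynomials live in a ring of Krull dimension $n+1$ and their common zero locus is set-theoretically contained in the $1$-dimensional set $\mc{Z}_X$, so by Krull's Hauptidealsatz they form a regular sequence and generate the ideal of a Cohen--Macaulay complete intersection of pure dimension $1$. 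The claimed homogeneity follows from the fact that $x_i$ has $\mb{G}_m$-weight $-a_i$, while the commutation $[\mc{W},\mc{V}]=p\mc{V}$ shows $\mc{V}(x_i)$ has $\mb{G}_m$-weight $-a_i+p=-a_i+2$, so each generator is homogeneous of degree $-a_i+2$ in the appropriate grading.

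The main obstacle will be to rule out spurious components or nonreduced structure in the scheme defined by these generators. I would address this by computing, at a generic point of each $\mc{Z}_j$, the Jacobian of the map $(f_1,\dots,f_n)=(2\mc{W}(x_1)-\mc{V}(x_1),\dots,2\mc{W}(x_n)-\mc{V}(x_n))$ in the $n$ directions transverse to the curve and verifying that this Jacobian has full rank $n$. This generic reducedness, combined with the Cohen--Macaulay property of the complete intersection, forces the ideal to be radical and to coincide with the vanishing ideal of $\mc{Z}_X$, completing the proof.
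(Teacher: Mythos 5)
First, a point of reference: the paper does not prove this statement at all --- it is quoted verbatim as Proposition 2 of Brion--Carrell \cite{MR2043405} and used as a black box --- so your argument has to stand on its own. Much of its shape is sound: the identity $\varphi(z)\cdot s_j=\lambda(z^{1/p})\cdot(\varphi(1)\cdot s_j)$ does place the curves inside $X_0\times\A^1$, the disjointness argument is fine, and differentiating $\lambda(t)\varphi(z)\cdot s_j=\varphi(t^pz)\cdot s_j$ at $t=1$ correctly yields $\mc{W}=pz\,\mc{V}$ along each orbit, hence the vanishing on $\mc{Z}_j$ of $v\mc{W}(x_i)-2\mc{V}(x_i)$ (your ``appropriate normalization'' is doing real work here: the generators as printed in the Lemma are missing the factor of $v$ --- compare Theorem~\ref{T:Eq coh grassmannian} --- and without it they are not even homogeneous of degree $-a_i+2$).

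The genuine gap is the reverse containment, which you assert rather than prove: ``their common zero locus is set-theoretically contained in the $1$-dimensional set $\mc{Z}_X$.'' That dimension bound is the entire content of the step; Krull, Cohen--Macaulayness, radicality and the identification of the ideal all hinge on it, and nothing in your write-up establishes it. The missing argument is a fiberwise one: on the slice $v=c\neq 0$ the equations say precisely that the vector field $c\mc{W}-2\mc{V}$ vanishes, and since $ch-2e=c\,\mathrm{Ad}(\varphi(1/c))h$ in $\mf{b}(2)$ (using $[h,e]=2e$), this field is a $\mb{G}_a$-translate of $c\mc{W}$; its zero set is therefore the finite set $\varphi(1/c)\cdot X^{\mb{G}_m}$, with simple zeros because the $\mb{G}_m$-weights at each $s_j$ are nonzero, while the fiber over $v=0$ is the zero set of $\mc{V}$, namely $\{s_0\}$. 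This single computation supplies the dimension bound, the component list, and the generic reducedness you defer to an unspecified Jacobian check. Two further defects: (i) your identification $\mc{Z}_X=\bigcup_{j\geq 1}\mc{Z}_j$ is circular --- you appeal to ``the construction in \cite{MR2043405}'' without stating it --- and the claim that the $\mb{G}_m$-fixed points of $\mc{Z}_X$ are the $(s_j,0)$ is false: since $\mc{V}$ does not vanish at $s_j$ for $j\geq 1$, the point $(s_j,0)$ does not lie on the curve at all; the only fixed point in $X_0\times\A^1$ is $(s_0,0)$, and the components are distinguished by their limit points $(s_j,\infty)$ in $X\times\PP^1$. (ii) The fiber computation shows $\{s_0\}\times\A^1$ (the $j=0$ instance of the displayed formula) is also an irreducible component of the zero locus, since both vector fields vanish at $s_0$; this is forced by $\C(v)\otimes_{\C[v]}\C[\mc{Z}_X]\cong \C(v)^{\,\#X^{\mb{G}_m}}$ having rank $r+1$, so your union over $j\geq 1$ cannot exhaust $V(f_1,\dots,f_n)$ and the asserted set-theoretic containment is, as stated, false.
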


Now we are ready to record a description of the $\mb{G}_m$-equivariant cohomology ring of $Gr(n-k,V)$. 
We use the notation of Theorem~\ref{T:Coh grassmannian}.
\begin{Theorem}\label{T:Eq coh grassmannian}
The $\mb{G}_m$-equivariant cohomology ring of $Gr(n-k,V)$ is isomorphic to the quotient (coordinate) 
ring $\C[\mc{Z}_X] = \C[w_{i,j},v]/\widetilde{I}_v$, where $\widetilde{I}_v$ is the ideal generated by 
\begin{align}\label{A:generators of I}
 (2(n-k)-2(i-j)) v w_{i,j}  - 2( -\text{\c{c}}_{1,i}w_{i-1,j} - w_{i,1} w_{n-k, j} + \text{\c{c}}_{k+1,j+1} w_{i,j+1} )
\end{align}
for $j=1,\dots, k$ and $i=1,\dots, n-k$.
\end{Theorem}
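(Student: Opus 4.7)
The plan is to apply Lemma \ref{L:BC} (the Brion--Carrell description) directly to $X = Gr(n-k,V)$. In Subsection \ref{S:Regular grassmannian} it was already verified that $X$ is a $(\mb{G}_a,\mb{G}_m)$-variety, with unique $\mb{G}_a$-fixed point $s_0 = W_0$ and with $\mc{V} = N_a$ the vector field coming from $\varphi$. Lemma \ref{L:BC} therefore furnishes a graded $\C$-algebra isomorphism $\rho_X : H^*_{\mb{G}_m}(X) \rightarrow \C[\mc{Z}_X]$, and presents the ideal of $\mc{Z}_X \subset X_0 \times \A^1$ as the one generated by the $n$ polynomials $2\mc{W}(x_i) - \mc{V}(x_i)$, where $\{x_i\}$ is any system of affine coordinates on the minus cell $X_0$ at $s_0$. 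So the proof will amount to identifying $X_0$ and its coordinates, computing $\mc{V}$ and $\mc{W}$ on them, and substituting.

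First I would identify $X_0$ with the basic affine chart $O_{\tau_0}$ of (\ref{A:standard form for U}). The chart $O_{\tau_0}$ is open, contains $W_0$, and has dimension $k(n-k) = \dim X$; moreover, from the explicit expression for the action of $\lambda(t)$ on $[id_{n-k} : A]$ computed in the proof of Theorem \ref{T:Coh grassmannian}, every entry $t^{-2(n-k)+2(i-j)} a_{i,j}$ tends to $0$ as $t \to \infty$. Hence $\lim_{t\to\infty} \lambda(t) \cdot y = W_0$ for every $y \in O_{\tau_0}$, so $X_0 = O_{\tau_0}$ and $\C[X_0 \times \A^1] = \C[w_{i,j}, v]$. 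The action of $\mc{V} = N_a$ on each $w_{i,j}$ is exactly what was recorded in (\ref{A:generators}); likewise, from the weight formula $\lambda(t) \cdot w_{i,j} = t^{-2(i-j)+2(n-k)} w_{i,j}$ in Theorem \ref{T:Coh grassmannian}, the vector field $\mc{W}$ acts on $w_{i,j}$ (viewed as a function on $X_0 \times \A^1$) as multiplication by $\tfrac{1}{2}\bigl(2(n-k) - 2(i-j)\bigr) v\, w_{i,j}$, the factor of $v$ arising because we are working equivariantly on the product $X_0 \times \A^1$, with $v$ the affine coordinate on $\A^1 = \PP^1 - \{(1,0)\}$.

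Substituting these two expressions into the template $2\mc{W}(w_{i,j}) - \mc{V}(w_{i,j})$ produces exactly the generators listed in (\ref{A:generators of I}), indexed by $1 \leq i \leq n-k$ and $1 \leq j \leq k$. Lemma \ref{L:BC} also guarantees that these polynomials form a regular sequence of the correct degrees in $\C[w_{i,j}, v]$, so the ideal $\widetilde{I}_v$ they generate is the full defining ideal of $\mc{Z}_X$. Combining with the isomorphism $\rho_X$ yields $H^*_{\mb{G}_m}(Gr(n-k,V)) \cong \C[w_{i,j},v]/\widetilde{I}_v$ as graded $\C$-algebras, which is the assertion of the theorem.

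The main obstacle is purely bookkeeping: matching conventions for how the $\mb{G}_m$-weights translate into the derivations $\mc{V}$ and $\mc{W}$ on functions, and keeping track of the scalar factors (in particular the factor of $v$ carried by $\mc{W}$ on $X_0 \times \A^1$ and the $\mb{G}_m$-weight $-2$ on $v$ itself, which determined the normalization $\lambda(t)\varphi(z)\lambda(t^{-1}) = \varphi(t^2 z)$ used in Subsection \ref{S:Regular grassmannian}). Everything else is a substitution of Theorem \ref{T:Coh grassmannian} and formula (\ref{A:generators}) into the Brion--Carrell recipe supplied by Lemma \ref{L:BC}.
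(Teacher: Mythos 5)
Your proposal is correct and follows essentially the same route as the paper: both apply Lemma~\ref{L:BC} with $\mc{V}=N_a$ as computed in (\ref{A:generators}) and $\mc{W}$ read off from the $\mb{G}_m$-weights in Theorem~\ref{T:Coh grassmannian}. You are in fact slightly more careful than the paper in verifying that the minus cell $X_0$ is the chart $O_{\tau_0}$ and in accounting for the factor of $v$ (and the harmless overall normalization by $2$, absorbed by rescaling $v$) in the generators of $\widetilde{I}_v$.
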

\begin{proof}
We already computed in Theorem~\ref{T:Coh grassmannian} the action of $\mc{V}$ on the coordinate functions 
$w_{i,j}$ of the minus-cell $O_{\tau_0}\subset Gr(n-k,V)$. Moreover, we computed the action of $\lambda(t)$ on 
$w_{i,j}$'s. Since $\frac{d}{dt}\lambda \vert_{t=1}(w_{i,j})= (2(n-k)-2(i-j))w_{i,j}$, the proof follows from 
Lemma~\ref{L:BC} and the aforementioned results of Brion and Carrell.
\end{proof}

\section{Torus action}\label{S:GaGm variety}

The Hilbert scheme of $k$ points on a variety $M$, denoted by $Hilb_k(M)$ is, by definition, the scheme that 
represents the functor from the collection of all subschemes of the plane with constant Hilbert polynomial 
$p(x)=k$~\cite{MR1611822}. It has been  known for sometime that if $M=\PP^n$ is a projective space, then $Hilb_k(M)$ 
is connected (see~\cite{MR0213368}) but it is not always nonsingular or irreducible except when $n=2$. 
See~\cite{MR0335512}. For an introduction to the Hilbert scheme of points with an eye towards combinatorial 
commutative algebra, we recommend~\cite{MR2110098}. 

We now show that, in general, the Hilbert scheme of points on a surface is not a regular $B(2)$-variety. To this 
end we consider the Poincar\'e polynomial of $Hilb_3(\PP^2)$, which is 
$$
g_3(t) := 1+ 2t^2 + 5t^4 + 6t^6+5t^8 + 2t^{10} + t^{12}.
$$
(See~\cite[Table 1]{MR870732}.) It can be checked by a computer that $g_3(t)$ has a root that is not a root of 
unity, hence, by Remark~\ref{A:roots of unity}, $Hilb_3(\PP^2)$ is not a regular $(\mb{G}_a,\mb{G}_m)$-variety. 
Although we checked only a few other cases, we anticipate that for all $k\geq 3$, the Poincar\'e polynomial of 
$Hilb_k(\PP^2)$ has a root which is not a root of unity. For $k=2$, we can make the following remark.

\begin{Remark}\label{R:fails}
There is a natural action of $GL(3)$ on $Hilb_k(\PP^2)$ which is induced from its action on polynomials: Let 
$g=\begin{pmatrix}
g_{11} & g_{12} & g_{13} \\
g_{21} & g_{22} & g_{23} \\
g_{31} & g_{32} & g_{33} 
\end{pmatrix}
$ 
be an element from $GL(3)$. Then $g$ acts on variables by $g\cdot X_i = \sum_{j=1}^3 g_{i j} X_j$ for $i=1,2,3$. 
In particular, the additive one parameter subgroup $\mb{G}_a=\C$, viewed as a subgroup of $GL(3)$ acts on the 
variables by 
\begin{align*}
\varphi(z) \cdot 
\begin{cases}
X_0 & \mapsto X_0 + z X_1 + z^2 X_2  \\
X_1 & \mapsto X_1 + z X_2  \\
X_2 & \mapsto  X_2. 
\end{cases}
\end{align*}
The resulting action on a degree $k$ monomial is given by 
\begin{align}\label{A:other terms}
\varphi(z) \cdot X_0^a X_1^b X_2^c = X_0^a X_1^b X_2^c + z( X_0^{a}X_1^{b-1} X_2^{c+1}  + 
X_0^{a-1} X_1^{b+1} X_2^c) + \text{ other terms }
\end{align}
where we assume $a,b \geq 1$. (Of course, if $a=0$ and $b>0$, then the coefficient of $z$ is $X_1^{b-1} 
X_2^{c+1}$. If $a>0$ and $b=0$, then it is just $X_0^{a-1}X_1X_2^c$.) Unfortunately, the nilpotent vector 
field that is obtained by differentiating (\ref{A:other terms}) is not regular, that is to say its Jordan form has 
more than one block. Indeed, when $k=2$, the matrix of the nilpotent operator with respect to basis 
$\{X_2^2, X_2X_1,X_2X_0, X_1^2, X_1X_0, X_0^2 \}$ is 
\begin{align*}
 \left.\frac{ d \varphi }{ d z } \right\vert_{z=0} =
\begin{pmatrix}
0 & 1 & 0 & 0 & 0 & 0 \\
0 & 0 & 1 & 2 & 0 & 0 \\
0 & 0 & 0 & 0 & 1 & 0 \\
0 & 0 & 0 & 0 & 1 & 0 \\
0 & 0 & 0 & 0 & 0 & 2 \\
0 & 0 & 0 & 0 & 0 & 0 \\
\end{pmatrix}.
\end{align*}
The Jordan form of this matrix is given by  
\begin{align*}
\begin{pmatrix}
0 & 1 & 0 & 0 & 0 & 0 \\
0 & 0 & 1 & 0 & 0 & 0 \\
0 & 0 & 0 & 1 & 0 & 0 \\
0 & 0 & 0 & 0 & 1 & 0 \\
0 & 0 & 0 & 0 & 0 & 0 \\
0 & 0 & 0 & 0 & 0 & 0 \\
\end{pmatrix}.
\end{align*}
The fixed locus of the resulting $\mb{G}_a$-action on the Grassmann of 4 dimensional subspaces of 
$$
\C \{X_2^2, X_2X_1,X_2X_0, X_1^2, X_1X_0, X_0^2 \}
$$ 
is isomorphic to the projective line $\PP^1$. Consequently, the resulting action on the Hilbert scheme of 2 points 
is not regular. Similar to the case of projective plane, for $\PP^n$, there is a natural $GL(n)$-, hence a 
$\mb{G}_a$-action on $Hilb_k(\PP^{n})$. In~\cite{KO}, by analyzing the corresponding 
$(\mb{G}_a,\mb{G}_m)$-graph of $Hilb_k(\PP^{n})$, Ki{\c{s}}isel and \"Ozkan gave a new proof of the 
connectedness of $Hilb_k(\PP^{n})$. The edges of the graph correspond to the projective lines that are fixed by 
the action of $\mb{G}_a$.
\end{Remark}

Next, we give a very brief account of elementary G\"obner basis theory that will be helpful for explaining the 
Gotzmann embedding and for the construction of our $\C^*$-action. Let $V$ be a vector space, $\{x_1',
\dots, x_n'\}$  be a basis of $V$, and let $S$ denote the symmetric algebra of $V$. Thus, $S$ is isomorphic to the 
polynomial ring $\C[x_1,\dots, x_n]$, where $\{x_1,\dots, x_n\}$ is dual basis to $\{x_1',\dots, x_n'\}$. We set 
$x^J=x_1^{j_1}\cdots x_n^{j_n}$ whenever $J=(j_1,\dots, j_n) \in \N^n$. Let $|J|$ denote the sum $j_1+\cdots 
+ j_n$.  A total order on monomials of degree $d$ is called a multiplicative order if the following properties hold 
true:
\begin{enumerate}
\item $x_1 > \cdots > x_n$ and 
\item for all $J,J',K\in \N^n$, if $x^J > x^{J'}$, then $x^K x^{J} > x^K x^{J'}$.
\end{enumerate}
We extend the multiplicative order to a monomial order by insisting on the ``usual'' requirement that $x^K > x^J$ 
if and only if $d_1 > d_2$ for all monomials $x^K$ and $x^J$ of total degree $d_1$ and $d_2$, respectively. The 
{\em lex ordering} is the order on $S$ such that, for multi-indices $K=(k_1,\dots, k_n)$ and $J=(j_1,\dots, j_n)$ 
with $|K|= |J|$, $x^K > x^J$ if and only if there exists $s\in \{1,\dots, n\}$ such that $k_s > j_s$ and $k_i=j_i$ for 
$i\in \{1,\dots, s-1\}$.


Let $I$ be a homogenous ideal in $S$. The initial ideal of $I$, denoted by $in(I)$, is the ideal generated by all 
initial monomials of the elements of $I$. For each monomial $x^J\in in(I)$, there is a homogenous polynomial 
$f_J\in I$ such that $in(f_J)= x^J$. A subset in $I$ consisting of elements $f_J\in I$ whose initial monomials 
form a basis for $in (I)_d$ is called a standard basis for $I_d$. We borrow the following fact from
~\cite[Proposition 1.11]{MR2640511}:
\begin{Lemma}\label{L:basis}
For any homogenous ideal $I=\oplus_{d\geq 1} I_d$, which is graded by degree of its elements, a standard 
basis for the $d$-th graded piece is in fact a basis for $I_d$. In particular, the vector spaces $I_d$ and 
$in(I)_d$ have the same dimension. Moreover $I$ and $in(I)$ have the same Hilbert function. 
\end{Lemma}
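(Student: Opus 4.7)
The plan is to verify that any standard basis $\{f_J\}$ of $I_d$ is both linearly independent and spans $I_d$, using the monomial order to do the bookkeeping; the Hilbert function equality then drops out.

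For linear independence, I would argue directly by contradiction using the leading term. Suppose there is a nontrivial relation $\sum_J c_J f_J = 0$ with some $c_J \neq 0$. Among the finitely many indices $J$ with $c_J \neq 0$, let $J^*$ be the one for which $x^{J^*} = in(f_{J^*})$ is maximal in the fixed monomial order. Since the monomials $\{x^J\}$ are pairwise distinct and the order is total, no other term in the sum can cancel $c_{J^*} x^{J^*}$, so the left-hand side has initial monomial $x^{J^*}$ and cannot be zero, a contradiction.

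For spanning, I would reduce an arbitrary $g \in I_d$ modulo the $f_J$'s by repeatedly subtracting leading terms. Since $g \in I_d$, its initial monomial lies in $in(I)_d$, which by hypothesis has the $x^J$'s as a vector-space basis, so $in(g) = c\, x^{J_0}$ for some scalar $c \in \C^*$ and some index $J_0$. Replacing $g$ by $g - c f_{J_0}$ produces either $0$ or an element of $I_d$ whose initial monomial is strictly smaller in the monomial order. Because there are only finitely many monomials of degree $d$ in $n$ variables, this process terminates, and the sum of the scalar multiples subtracted off expresses $g$ as a linear combination of the $f_J$'s. Together with the previous paragraph, this shows that $\{f_J\}$ is a basis of $I_d$, and in particular $\dim_\C I_d = \#\{x^J\} = \dim_\C in(I)_d$.

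The Hilbert function statement is then immediate: for every $d \geq 0$,
\[
\dim_\C (S/I)_d = \dim_\C S_d - \dim_\C I_d = \dim_\C S_d - \dim_\C in(I)_d = \dim_\C (S/in(I))_d,
\]
so $I$ and $in(I)$ have the same Hilbert function. I do not foresee a real obstacle: the argument is standard, and the only point requiring care is that termination of the leading-term reduction uses finiteness of $S_d$ together with the fact that the monomial order restricted to monomials of a fixed degree is a well-order (trivially, since the set is finite).
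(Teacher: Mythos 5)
Your argument is correct and is the standard Gr\"obner-basis proof: leading-term comparison gives linear independence, and reduction by subtracting off initial terms (which terminates since there are only finitely many monomials of degree $d$) gives spanning, after which the Hilbert function claim is immediate. The paper itself offers no proof of this lemma --- it is simply imported from Proposition 1.11 of the cited reference --- so there is nothing to contrast with; your write-up supplies exactly the argument that source contains.
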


We will apply these considerations to the ideals in $Hilb_k(\PP^2)$, so, let $X_0,X_1,X_2$ be a system of 
coordinates on the plane $\PP^2$. The polynomial ring $R=\C[X_0,X_1,X_2]$ is graded by degree, 
$R=\oplus_{d\geq 0} R_d$, and the points in $Hilb_k(\PP^2)$ are homogeneous ideals of colength $k$. 
Therefore, if $I\in Hilb_k(\PP^2)$, then $I$ is a homogeneous ideal with Hilbert polynomial $Q(t)={t+ 2 
\choose 2} - k$. The ``Gotzmann number'' for the bound of regularity for such ideals is $k$. 
See \cite[Lemma 2.9, pg 65]{MR0480478}. Set $P_k:=Q(k)=n-k$. Let $\ms{A}_k$ denote the set of 
monomials of total degree $k$. It is clear that $\ms{A}_k$ is a basis for $R_k$, hence the dimension of 
$R_k$ is $n=n_k:=|\ms{A}_k|= {k+2 \choose k}$. It follows from Gotzmann's work~\cite{MR0480478} 
that the map 
\begin{align*}
Hilb_k(\PP^2) &\longrightarrow Gr( P_k, R_k)\\
I& \longmapsto I \cap R_k
\end{align*}
is a closed embedding. 

Our preferred monomial ordering on $R$ is the lexicographic ordering with $X_0 > X_1 > X_2$. We use it to 
order $\ms{A}_d$ as follows:
\begin{align}\label{A:revlex}
{\ms{A}_d}:= \{ X_2^d, \ X_2^{d-1}X_1, \dots,  X_1^d, X_2^{d-1}X_0, \ X_2^{d-2} X_1X_0, \dots, 
X_1^{d-1}X_0, \dots, X_0^d \}.
\end{align}
Let $e_1,\dots, e_d$ denote the elements of ${\ms{A}_d}$ in the increasing order as above. Let $\lambda_0,
\lambda_1,\lambda_2$, and $g>1$ be positive integers. Define $\lambda: \C^*\times R_1 \rightarrow R_1$ 
by
\begin{align}\label{A:T action}
\lambda(t) \cdot X_i  &= t^{ g^{\lambda_i} } X_i\ \text{  for } i=0,1,2.
\end{align}
$\lambda$ extends to give a $\C^*$-action on each component $R_d$. For a monomial $X=X_0^{a_0}X_1^{a_1}X_2^{a_2} 
\in R_d$, we set $\lambda(t)\cdot X =t^{\alpha}X$, where $\alpha = a_0 g^{\lambda_0} + a_1 g^{\lambda_1} + 
a_2 g^{\lambda_2}$. It is clear now that $\lambda$ operates on homogenous ideals of $R$ as well as on the 
grassmannian of $r$ dimensional subspaces of $R_d$, for each degree $d \geq 1$ and $1\leq r \leq \dim R_d$. 
In relation with the Gotzmann embedding of $Hilb_k(\PP^2)$ into $Gr(n-k,R_k)$, we insist that $\lambda$ 
satisfies the following conditions:
\begin{enumerate}
\item $g^{\lambda_0} > a g^{\lambda_1} + b g^{\lambda_2}$ for any nonnegative integers $a$ and $b$ such 
that $a+b=k$;
\item $g^{\lambda_1} > cg^{\lambda_2}$ for any nonnegative integer $c$ such that $0\leq c \leq k$.
\end{enumerate}

\begin{Lemma}\label{L:Taction}
Let $X$ and $Y$ denote the Grassmann variety $Gr(n-k,R_k)$ and the Hilbert scheme $Hilb_k(\PP^2)$, 
respectively. The $\C^*$-action (\ref{A:T action}) satisfying the two additional conditions of the previous 
paragraph acts on both of $X$ and $Y$. Moreover, $W\in X^\lambda$ (the fixed point set of $\lambda$) 
if and only if $W$ is a coordinate subspace, that is to say, $W$ is spanned by a subset with cardinality 
$n-k$ of $\ms{A}_k$. A point $I\in Y$ is invariant under the action if and only if $I$ is a monomial ideal, 
so $I \cap R_k=W$ for some $W\in X^\lambda$.
\end{Lemma}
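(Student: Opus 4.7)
The plan is first to promote $\lambda$ to a $\C^*$-action on all of $R$ and to show that conditions (1)--(2) imply pairwise distinct $\lambda$-weights on the monomial basis of each $R_d$ with $d\le k$. Once the weight-distinctness is in hand, the two claimed fixed-point descriptions fall out almost formally.

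First I would check that $\lambda$, defined by (\ref{A:T action}) on $R_1$, extends functorially to a graded $\C^*$-action on the symmetric algebra $R$, acting diagonally in the monomial basis of each $R_d$ with eigenvalue $t^{\alpha_a}$ on $X^a = X_0^{a_0}X_1^{a_1}X_2^{a_2}$, where $\alpha_a = a_0 g^{\lambda_0} + a_1 g^{\lambda_1} + a_2 g^{\lambda_2}$. This immediately gives an action on $Gr(r,R_d)$ for any $r$; since $\lambda$ preserves homogeneity of ideals and (being linear on every $R_d$) preserves their Hilbert functions, it also acts on $Y = Hilb_k(\PP^2)$, and the Gotzmann embedding $I\mapsto I\cap R_k$ is manifestly $\lambda$-equivariant.

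The technical heart is the weight-distinctness claim: for every $d\le k$ and every pair of distinct exponent vectors $a\neq b$ with $|a|=|b|=d$, one has $\alpha_a\neq \alpha_b$. Set $c_i=b_i-a_i$, so $\sum c_i=0$ and $|c_i|\le k$. If $c_0\neq 0$, assume $c_0<0$; then $|c_0|\,g^{\lambda_0}=c_1 g^{\lambda_1}+c_2 g^{\lambda_2}$, whose right-hand side is bounded above by $k\cdot\max(g^{\lambda_1},g^{\lambda_2})$, while condition (1) taken with $(a,b)=(k,0)$ and $(0,k)$ yields $g^{\lambda_0}>k\,g^{\lambda_j}$ for $j=1,2$; this contradicts $|c_0|g^{\lambda_0}\ge g^{\lambda_0}$. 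A short case analysis on the signs of $c_1,c_2$ handles the situations in which one of them is negative (in which case one simply drops the negative term and bounds the remaining positive term by $k\,g^{\lambda_j}$). Hence $c_0=0$; then $a_1+a_2=b_1+b_2$ and $(a_1-b_1)(g^{\lambda_1}-g^{\lambda_2})=0$, and condition (2) with $c=1$ gives $g^{\lambda_1}>g^{\lambda_2}$, forcing $a_1=b_1$ and $a_2=b_2$, a contradiction with $a\neq b$.

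Finally I combine the pieces. Distinct weights on $R_d$ imply that the $\C^*$-weight spaces are the one-dimensional lines $\C\cdot X^a$, so any $\lambda$-invariant subspace of $R_d$ is a direct sum of such lines, i.e.\ a coordinate subspace in the monomial basis. Taking $d=k$ yields the first half of the statement, namely the characterization of $X^\lambda$. For the second half, if $I\in Y^\lambda$ then every $I\cap R_d$ is $\lambda$-invariant; for $d\le k$ it is spanned by monomials by the preceding paragraph, and for $d>k$ the equality $I\cap R_d=R_{d-k}\cdot(I\cap R_k)$ coming from $k$-regularity propagates the monomial property upward. Thus $I$ is a monomial ideal, and by equivariance $I\cap R_k=W$ is a fixed coordinate subspace, i.e.\ $W\in X^\lambda$. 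The reverse implication is immediate since each monomial is a $\lambda$-eigenvector. I expect the only genuinely delicate step to be the sign analysis in the weight-distinctness argument; every other piece is either a functorial consequence of the $\C^*$-action on $R_1$ or a direct application of Gotzmann regularity.
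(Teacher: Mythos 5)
Your proposal is correct and follows essentially the same route as the paper: the paper's proof simply cites Gotzmann for the action on $X$ and $Y$, asserts that the two numerical conditions force distinct weights on $\ms{A}_k$ (making $\lambda$ a regular one-parameter subgroup in Borel's sense), and invokes the standard description of fixed points of such actions on Grassmannians. You have filled in exactly the details the paper delegates to references — the sign analysis proving weight-distinctness and the use of $k$-regularity to propagate the monomial property to degrees above $k$ — and both are carried out correctly.
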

\begin{proof}
The fact that $\C^*$ acts (via $\lambda$) on both of $X$ and $Y$ is proven in~\cite[Section 2]{MR968319}. 
The additional conditions on $\lambda$ are required so that the action on $X$ has distinct weights on 
$\ms{A}_k$, hence it is a regular (in the sense of \cite[Section 12]{Borel}) one-parameter subgroup action. 
It is well known that such a $\C^*$ action has finitely many fixed points on the grassmannian as specified 
in the latter statement of the lemma. 
\end{proof}

\section{Symmetric functions and cohomology}\label{S:Carrell}
Our goal in this section is to analyze the ring homomorphism between the associated graded rings of the rings 
of functions on the fixed point schemes $Z\subset X=Gr(n-k,R_k)$ and $Z'\subset Y=Hilb_k(\PP^2)$. Recall 
that the $\C^*$-action on $X$ is induced by the one-parameter subgroup $\lambda : \C^*\rightarrow GL(R_k)$. 
We know that $\lambda$ is diagonalizable with respect to the basis $\ms{A}_k$. Let us denote its eigenvalues 
by 
$$
\lambda_1=\lambda_1(t),\dots, \lambda_n=\lambda_n(t),
$$ 
which are distinct from each other. See the proof of Lemma~\ref{L:Taction}. Also from Lemma~\ref{L:Taction},  
we see that both as a scheme and as a set $Z$ consists of ${n \choose k}$ reduced points $W_I \in X$ $(I=
(1\leq i_1 < \dots < i_{n-k} \leq n))$. Here, $W_I$ is the subspace spanned by $\{e_{i_1},\dots, e_{i_{n-k}}\}$ 
($e_i \in \ms{A}_k$).

The evaluation map $p\mapsto p(\lambda)$ defined on polynomials in $\C[x_1,\dots, x_n]$ gives a natural 
morphism $\rho: \C[x_1,\dots,x_n] \rightarrow H^0(Z)$. Let $\sigma_1,\dots, \sigma_n$ denote the 
elementary symmetric polynomials in variables $x_1,\dots, x_n$ and let $H$ the subspace in $\C[\sigma_1,
\dots,\sigma_{n-k}]$ spanned by the $m$-fold ($1\leq m \leq k$) products of $\sigma_1,\dots, \sigma_{n-k}$ 
along with the constant polynomial 1. It is shown in \cite[Lemma 1]{MR0469928} that the image of $H$ 
under $\rho$ is isomorphic to $H^0(Z)$. In fact, much more is shown to be true 
\cite[Theorem 1]{MR0469928}. Let $p$ be a number such that $0\leq p \leq k(n-k)$ and let $B_p$ denote 
the set of Schur polynomials which satisfy
\begin{enumerate}
\item $\mu$ is a partition of $r$ with $0\leq r \leq p$;
\item $\mu$ has at most $n-k$ parts;
\item the largest part of $\mu$ is $k$.
\end{enumerate}
Then $B_p$ is an additive basis of $H^0(Z)\cap F_{-p}$. Consequently, the images in $F_{-p}/F_{-p+1}$ of 
the Schur polynomials $\{ s_\mu\}_{\mu \in B_p}$ form a basis for $H^{p}(X,\C)$, where $X=Gr(n-k,R_k)$.

Now we are ready to give a presentation of the cohomology ring of the Hilbert scheme of points by using the 
above description of the cohomology ring of the Grassmann variety. 
\begin{Theorem}
The images under $\phi'$ of the residue classes of the Schur polynomials $s_\mu$, where $\mu\in B_p$ 
and $0\leq p \leq 2k$ form a basis for $H^{p}(Y,\C)$. 
\end{Theorem}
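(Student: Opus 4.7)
The plan is to apply the simplified diagram of Remark~\ref{R:importance of} to the pair $Y = Hilb_k(\PP^2) \hookrightarrow X = Gr(n-k,R_k)$ given by the Gotzmann embedding, with $Z = X^\lambda$ and $Z' = Y \cap Z$ the fixed-point schemes of the $\C^*$-action $\lambda$ constructed in Section~\ref{S:GaGm variety}. First I would verify the hypotheses of that diagram: $X$ and $Y$ are nonsingular (the latter by Fogarty's theorem for Hilbert schemes of smooth surfaces); $Y$ is $\lambda$-stable and $Z, Z'$ are both finite and reduced by Lemma~\ref{L:Taction}; $j^{*}: H^0(Z) \to H^0(Z')$ is surjective because $Z'$ is a subset of the finite reduced scheme $Z$; and the odd Betti numbers of $Y$ vanish by the Bia{\l}ynicki-Birula decomposition induced on $Y$ by $\lambda$.

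By commutativity of the diagram in Figure~\ref{F:simplified}, for each Schur polynomial $s_\mu$ regarded as an element of $H^0(Z)$ via the evaluation map $\rho$, one has $\psi(\phi'(\overline{s_\mu})) = i^{*}(\widetilde{s_\mu})$, where $\widetilde{s_\mu} \in H^{p}(X,\C)$ is the corresponding Schubert class. The result recalled just before this theorem (i.e., Theorem~1 of \cite{MR0469928}) tells us that $\{\widetilde{s_\mu}\}_{\mu\in B_p}$ is a basis of $H^{p}(X,\C)$. The theorem therefore reduces to two assertions: $(a)$ $\psi$ is an isomorphism, and $(b)$ the images $\{i^{*}(\widetilde{s_\mu})\}_{\mu\in B_p}$ are linearly independent in $H^p(Y, \C)$.

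For $(a)$, the final sentence of Section~\ref{S:Preliminaries} reduces the question to the surjectivity of $i^{*}: H^{*}(X) \to H^{*}(Y)$. This I would deduce from the theorem of Ellingsrud--Str{\o}mme \cite{MR1228610} stating that $H^{*}(Hilb_k(\PP^2),\C)$ is generated as a $\C$-algebra by Chern characters of the tautological quotient bundle on $Y$, together with the observation that this tautological bundle on $Y$ is the restriction along the Gotzmann embedding $i$ of the tautological quotient bundle on $X$. Hence the image of $i^{*}$ contains a ring generating set and equals all of $H^{*}(Y,\C)$. The main obstacle is then $(b)$: obtaining linear independence. Since $\phi'$ is surjective, the images $\{\phi'(\overline{s_\mu})\}_{\mu\in B_p}$ span $H^{p}(Y,\C)$ through the isomorphism $\psi$; upgrading this to a basis requires a dimension count matching $|B_p|$ with $\dim H^{p}(Y,\C)$, computed via the Bia{\l}ynicki-Birula cell decomposition of $Y$ coming from $\lambda$. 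This final combinatorial identification---pinpointing exactly which $\lambda$-fixed coordinate subspaces in $X$ come from monomial ideals of colength $k$ in $\C[X_0,X_1,X_2]$---is where the bulk of the work lies.
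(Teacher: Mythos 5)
Your setup (the Gotzmann embedding, the fixed-point schemes $Z$ and $Z'$, and the verification of the hypotheses of the diagram in Figure~\ref{F:simplified}) matches the paper. The divergence is in how you establish that $\psi$ is an isomorphism, and there your argument has a genuine gap. The paper does \emph{not} argue via surjectivity of $i^*:H^*(X)\to H^*(Y)$; it applies \cite[Theorem 1]{MR827353} directly to the nonsingular variety $Y$ equipped with the restricted vector field, whose zero scheme $Z'$ is finite, together with the vanishing of the odd Betti numbers of $Y$. Your route through Ellingsrud--Str{\o}mme does not work as stated: their generating set for $H^*(Hilb_k(\PP^2),\C)$ consists of the Chern classes of the three bundles ${p_1}_*(\mc{O}_{\mc{F}_k}\otimes {p_2}^*\mc{O}_{\PP^2}(-j))$ for $j=1,2,3$, and at most one of these is (up to a twist) the restriction along the Gotzmann embedding of the tautological quotient bundle of the single Grassmann variety $Gr(n-k,R_k)$; the other two are not pulled back from $X$, so the image of $i^*$ is not known to contain a ring generating set. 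In fact $i^*$ fails to be surjective in general: for $k=2$ one has $X=Gr(4,6)$ with $\dim H^2(X,\C)=1$, whereas $\dim H^2(Hilb_2(\PP^2),\C)=2$, so no map $H^2(X)\to H^2(Y)$ can be onto.

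Your assertion $(b)$, which you correctly identify as the crux, is left undone, and the dimension count you propose to carry out will not close: again for $k=2$, in filtration degree $1$ there is a single relevant Schur polynomial $s_{(1)}$ while $\dim H^2(Hilb_2(\PP^2),\C)=2$, and in degree $4$ there are three partitions of $4$ fitting in the $4\times 2$ box while $\dim H^8(Hilb_2(\PP^2),\C)=1$; so the images $\{i^*(\widetilde{s_\mu})\}$ are neither independent nor, in low degrees, even spanning. For comparison, the paper's own proof delivers strictly less than linear independence: it defines the image filtration $G_{-p}=\phi(F_{-p})$ on $H^0(Z')$, observes that $\phi'$ is then surjective by construction, and concludes only that the residue classes of the $s_\mu$ generate the associated graded ring. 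So your proposal neither completes $(a)$ by a valid argument nor completes $(b)$, and $(b)$ as you formulate it cannot be completed; any correct version of this statement has to be about a spanning/generating set rather than a basis indexed by all of $B_p$.
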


\begin{proof}
In the light of Remark~\ref{R:importance of}, the importance of Lemma~\ref{L:Taction} becomes clear once we 
use it with \cite[Theorem 3]{MR827353}. Let $Z$ denote the fixed point set of $\lambda$ on $X:=Gr(n-k,R_k)$ 
and let $Z'$ denote $Z\cap Y = Z\cap Hilb_k(\PP^2)$. Since $Z$ is finite and 
the one-parameter subgroup 
$\lambda$ is regular, the connected components of $Z$ are irreducible. It follows that the inclusion 
$Z'\hookrightarrow Z$ induces a surjection on the cohomology level, hence we get the commuting diagram 
(\ref{F:simplified}) where $X=Gr(n-k,R_k)$ and $Y=Hilb_k(\PP^2)$. Since $Y$ is nonsingular and all odd Betti 
numbers of $Y$ are zero, it follows from \cite[Theorem 1]{MR827353} that the finiteness of $Z'$ implies 
$\psi:\textbf{gr}\ H^0 ( Z') \rightarrow H^*(Y)$ is an isomorphism. Therefore, the arrow on the right hand side 
of (\ref{F:simplified}) is a surjective graded algebra homomorphism. To fully describe the structure of $H^*(Y)$ 
it remains to understand the surjective homomorphism $\phi':\textbf{gr}\ H^0(Z)\rightarrow\textbf{gr}\ H^0(Z')$. 
But the canonical map $\phi$ induces a filtration 
$$
H^0(Z') = G_{-2k} \supset G_{-2k+1}\supset \cdots \supset G_0=0
$$ 
with $G_{-p}= \phi( F_{-p})$ ($0\leq p \leq 2k=\dim Y$). Therefore, by passing to the associated graded ring, 
our claim follows from the above discussion.
\end{proof}

\section{Equivariant cohomology revisited}\label{S:ECoh}
Our goal in this section is to give a presentation of the equivariant cohomology ring of $Y=Hilb_k(\PP^2)$ by fine-tuning the results of Aky{\i}ld{\i}z, Carrell, and Lieberman from \cite{MR827353} in the 
equivariant setting. We start with reviewing some well known facts about vector bundles on the Hilbert 
scheme.

\subsection{Tautological bundles.}
As before, we denote the grassmannian $Gr(n-k,R_k)$ by $X$ and $\lambda$ stands for the one-parameter 
subgroup (\ref{A:T action}). Let $V$ denote the vector space $R_k$, $p=(a_0,a_1,a_2)$ be a point in $\C^3$ 
and let $\widetilde{\psi} (p)$ denote 
$$
\widetilde{\psi}(p)=(a_2^k , a_2^{k-1}a_1, \dots, a_0^k) \in V^*,
$$
where the ordering of the entries of $\widetilde{\psi}(p)$ are as in (\ref{A:revlex}). Passing to quotient, 
$\widetilde{\psi}$ defines an embedding $[p]\mapsto [\psi(p)]$ of $\PP^2$ into $\PP^{n-1}=\PP(V^*)$. 
The reason for we are considering $\PP^2\hookrightarrow \PP(V^*)$ is because the ``universal family'' 
on $Y$ is directly related to the tautological (universal) vector bundle of $X$. By definition, the universal 
family on $Y$, denoted by $\mc{F}_k$, is the incidence variety 
$$
\mc{F}_k = \{ ( I,p) :\ I\in Y \text{ and } \ p \in V(I) \}.
$$
The push down by the canonical projection $p_1 : \mc{F}_k \rightarrow Y$ of the structure sheaf of 
$\mc{F}_k$ gives a rank $k$ vector bundle on $Y$. This is easy to check on the locus consisting of 
radical ideals. For reasons to become clear soon, let us consider $Gr(k,V^*)$. The tautological 
bundle on $Gr(k,V^*)$, denoted by $\ms{S}$, is the incidence variety whose underlying set of points 
is 
$$
\mc{S} = \{ ( W,p) :\ W\in Gr(k,V^*) \text{ and } \ p \in W \}.
$$
Note that there is a natural duality isomorphism between $Gr(k,V^*)$ and $Gr(n-k,V)$. Note also that 
$Gr(n-k,V)$ is the same variety as $Gr(n-k-1,\PP(V))$, the grassmannian of $n-k-1$ dimensional 
projective subspaces in $\PP(V)$. Similarly, $Gr(k,V^*)$ is the same variety as $Gr({k-1},\PP(V^*))$.

Now, viewing $Y$ as an embedded subvariety of $X=Gr(n-k-1,\PP(V))$, we see that the pull back of $\mc{S}$ 
from $X$ to $Y$ is isomorphic $\mc{F}_k$. The $\C^*$-action on $R_k$ gives rise to a torus action on 
$\mc{S}$. Since the embedding $Y\hookrightarrow X$ is $\lambda$-equivariant, $\mc{O}_{\mc{F}_k}$ 
is a $\lambda$-equivariant bundle on $Y$. Clearly, the first projection $p_1 : \mc{F}_k \rightarrow Y$ is 
$\lambda$-equivariant as well. In~\cite{MR1228610}, it is shown by Ellingsrud and Str{\o}mme that the 
cohomology ring $H^*(Y)$ is generated by the Chern classes of the following rank $k$ bundles:  
$$
{p_1}_*(\mc{O}_{\mc{F}_k} \otimes {p_2}^* \mc{O}_{\PP^2} (-j) )\qquad \text{for } j=1,2,3,
$$
where $p_2$ is the second projection from $\mc{F}_k$ onto $\PP^2$. Since $\mc{O}_{\mc{F}_k}$ is 
$\lambda$-equivariant, the tensor products of $\mc{O}_{\mc{F}_k} \otimes {p_2}^* \mc{O}_{\PP^2} (-j)$ 
with the line bundles $\mc{O}_{\PP^2}(-j)$, $j=1,2,3$ is still $\lambda$-equivariant. In particular, the 
bundles ${p_1}_*(\mc{O}_{\mc{F}_k} \otimes {p_2}^* \mc{O}_{\PP^2} (-j))$'s are all 
$\lambda$-equivariant.

\subsection{Equivariant Chern classes.}

Let $K$ be a Lie group and let $X$ be a $K$-space. A vector bundle $\mc{E}$ on $X$ is called $K$-linearized 
if the $K$ action on $X$ lifts to $\mc{E}$ such that each $g\in K$ defines a linear map from $\mc{E}_x$ to 
$\mc{E}_{g\cdot x}$, where $\mc{E}_x$ and $\mc{E}_{g\cdot x}$ are the fibers of $\mc{E}\rightarrow X$ on 
$x$ and $g\cdot x$, respectively.  In relation with $K$-linearized vector bundles, we have the notion of 
equivariant Chern classes: the $k$-th $K$-equivariant Chern class $c_k^K(\mc{E}) \in H^k_K(X)$ is the $k$-th 
Chern class of the vector bundle 
$$
\mc{E} \times EK \rightarrow (X\times EK)/K.
$$
Note that if $x\in X^K$ is a fixed point of the action of $K$, then the restriction of $c_k^K(\mc{E})_x$ is contained 
in $H^*_K(x)$, which is isomorphic to the coordinate ring of $K$. From now on we focus on the group $K=\C^*$ 
which acts on $X$ and $Y$ by $\lambda$. Note that $H^*_K(pt) = \C[ v,v^{-1}]$, where $v$ is a variable. It is clear 
that the tautological bundle as well as the ``universal quotient bundle'' on $X=Gr(n-k,V)$ and the universal family 
on $Y=Hilb_k(\PP^2)$ are $K$-linearized. Since $H^*(X)\rightarrow H^*(Y)$ is surjective, (from Leray spectral 
sequence),  we see that $H^*_K(X)\rightarrow H^*_K(Y)$ is surjective also. Next, we will use a result of Brion 
\cite{MR1466694} for computing these rings. This is the approach taken by Evain in \cite{MR2323683} for 
computing the equivariant Chow rings of $X$ and $Y$.

Let $X$ denote a nonsingular projective variety on which an algebraic torus $T$ acts. Let $M$ denote the 
character group of $T$ and denote by $S$ the symmetric algebra over $\Z$ of the abelian group $M$. The 
following result is a simplification by Brion of Edidin and Graham's localization theorem for equivariant 
Chow rings \cite{MR1623412}. Since we are working with cohomology, we modified the statement 
accordingly.

\begin{Theorem}(\cite[Corollary 2.3.2]{MR1466694})\label{T:BGE}
Let $i : X^T \hookrightarrow X$ denote the inclusion of fixed point subscheme $X^T$ into $X$. Then $S$-linear 
map $i^* : H^*_T(X^T,\Q) \rightarrow H^*_T(X,\Q)$ becomes an isomorphism after inverting all nonzero elements 
of $M$. 
\end{Theorem}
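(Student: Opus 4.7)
The plan is to reduce the claim to a vanishing statement for the $T$-equivariant cohomology of the complement $X\setminus X^T$ after localization, and then to establish this vanishing via the orbit-type stratification of the complement. The isomorphism is the classical equivariant localization phenomenon (a theorem of Borel, Atiyah--Bott, Quillen in topology, and of Brion in the algebraic setting), so the proof that I would carry out follows the standard template.

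First, I would consider the long exact sequence of the pair $(X,X^T)$ in $T$-equivariant cohomology with $\Q$ coefficients:
$$
\cdots \to H^k_T(X,X^T;\Q) \to H^k_T(X;\Q) \to H^k_T(X^T;\Q) \to H^{k+1}_T(X,X^T;\Q) \to \cdots .
$$
Localization at the multiplicative set $M\setminus\{0\}\subset S$ is an exact functor on $S$-modules, so the result reduces to showing that the relative group $H^*_T(X,X^T;\Q)$ vanishes after inverting all nonzero characters. By excision (or, if one prefers, by Thom isomorphism applied to a $T$-stable tubular neighborhood of the smooth fixed locus inside the smooth $X$), this is equivalent to the vanishing of $H^*_T(X\setminus X^T;\Q)$ after localization.

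Second, I would stratify $X\setminus X^T$ by orbit type. For each proper closed subgroup $H\subsetneq T$, let $S_H\subset X\setminus X^T$ denote the locally closed $T$-stable subset of points with stabilizer $H$. Since $T$ is a torus, only finitely many subgroups occur as stabilizers on a Noetherian variety, so one obtains a finite filtration of $X\setminus X^T$ by $T$-stable closed subsets whose successive differences are disjoint unions of orbits of the form $T/H$ with $H\subsetneq T$. For any such $H$, choose a character $\chi\in M$ whose restriction to $H$ is trivial but which is nonzero in $M$; such a $\chi$ exists precisely because $H$ is a proper closed subgroup of the torus. Then $H^*_T(T/H;\Q)\cong H^*_H(\mathrm{pt};\Q)$ is an $S$-module on which $\chi$ acts by $0$, hence its localization at $\chi$ vanishes, and therefore so does its localization at the full set $M\setminus\{0\}$.

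Third, I would glue these stratum-by-stratum vanishings together via the long exact sequences associated with the filtration (or via the associated Leray-type spectral sequence). Because each graded piece is killed after localization, a straightforward induction on the length of the filtration yields the vanishing of $H^*_T(X\setminus X^T;\Q)$ after inverting $M\setminus\{0\}$, which by the first step completes the proof. The main obstacle I anticipate is a bookkeeping one: carefully verifying that the orbit-type stratification in the algebraic-geometric setting is finite, $T$-stable, and has the required local product structure so that the excision and spectral sequence arguments go through rigorously. Once that is in place, the character-by-character annihilation argument on each stratum is essentially automatic.
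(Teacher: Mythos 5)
Your argument is sound, but note that the paper does not actually prove this statement: Theorem~\ref{T:BGE} is quoted verbatim (up to translation from Chow groups to cohomology) from Brion's Corollary~2.3.2, which in turn rests on the Edidin--Graham localization theorem for equivariant Chow groups, proved there by algebraic means (resolution to the smooth case, equivariant envelopes, and the self-intersection formula for the fixed locus). What you supply instead is the classical topological proof in the style of Borel, Quillen, and Atiyah--Bott: reduce via the long exact sequence of the pair and exactness of localization to the vanishing of $H^*_T(X\setminus X^T;\Q)$ after inverting $M\setminus\{0\}$, then kill each orbit-type stratum $T/H$ by a nonzero character $\chi$ restricting trivially to the proper stabilizer $H$ (so that $c_1(\chi)$ acts by zero on $H^*_T(T/H;\Q)\cong H^*(BH;\Q)$), and induct over the finite filtration. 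This is a legitimate and self-contained alternative, and arguably more elementary than the route through equivariant intersection theory; what the Chow-theoretic version buys is applicability to singular or non-projective schemes and to Chow rings, which is irrelevant here since the paper only applies the result to the smooth projective $X=Gr(n-k,R_k)$ and $Y=Hilb_k(\PP^2)$. Two small points to tighten. First, the statement as printed writes $i^*:H^*_T(X^T,\Q)\to H^*_T(X,\Q)$; this arrow points in the direction of the Gysin pushforward $i_*$ (which is what Brion's Corollary~2.3.2 asserts for Chow groups), whereas your long-exact-sequence argument proves the claim for the restriction map $H^*_T(X,\Q)\to H^*_T(X^T,\Q)$. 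For smooth $X$ the two versions are equivalent after localization, since $i^*i_*$ is multiplication by the equivariant Euler class of the normal bundle of $X^T$, which becomes invertible once $M\setminus\{0\}$ is inverted; you should say this explicitly. Second, excision alone does not identify $H^*_T(X,X^T;\Q)$ with $H^*_T(X\setminus X^T;\Q)$; you need the $T$-stable tubular neighborhood $N$ of the (smooth) fixed locus and the observation that $N\setminus X^T$ is also fixed-point free, so that a Mayer--Vietoris or pair argument finishes the reduction --- you gesture at this, and the finiteness of orbit types for a torus acting on a quasi-projective variety is indeed standard, so these are presentational rather than mathematical gaps.
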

\begin{Remark}\label{R:quotient}
Let us point out also that the ordinary cohomology $H^*(X,\Q)$ is the quotient of $H^*_T(X,\Q)$ by its subgroup 
$MH^*_T(X,\Q)$, see~\cite[Corollary 2.3.1]{MR1466694}.
\end{Remark}

In our situation, $T=\C^*$, $M\cong \Z$, and $S=\Z[M]\cong \Z[v]$, where $v$ is a variable that we use in place 
of the primitive character of $T$. The action of $\C^*$ is as defined in (\ref{A:T action}).

\begin{Theorem}\label{T:Eq coh Hilb}
Let $Z$ and $Z'$ denote, as before, the fixed point schemes in $X=Gr(n-k,R_k)$ and $Y=Hilb_k(\PP^2)$,
respectively. Then the $\C^*$-equivariant cohomology rings of $X$ and $Y$ are 
\begin{align}\label{A:Eq coh Hilb}
H^*_{\C^*} (X,\Q) \cong \bigoplus_{ W\in Z} \Q [v , v^{-1}] \ \text{ and } \ H^*_{\C^*} (Y,\Q) \cong 
\bigoplus_{ W\in Z'} \Q [v , v^{-1}].
\end{align}
Furthermore, the following diagram where all arrows are surjective $\C[v]$-algebra homomorphisms commutes:
\begin{figure}[htp]
\centering
\begin{tikzpicture}[scale=1]

\begin{scope}
\node (a1) at (-2,2) {$H^*_{\C^*} (X;\C)$} ;
\node (a2) at (-2,0) {$H^* (X)$} ;
\node (b1) at (2,2) {$H^*_{\C^*} (Y;\C)$} ;
\node (b2) at (2,0) {$H^*(Y)$} ;

\draw[->, thick] (a1) to (a2);
\draw[->, thick] (a1) to (b1);
\draw[->, thick] (b1) to (b2);
\draw[->, thick] (a2) to (b2);
\end{scope}
\end{tikzpicture}
\end{figure}
\end{Theorem}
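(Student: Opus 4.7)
The plan is to deduce this from Brion's localization Theorem~\ref{T:BGE} combined with the $\C^*$-equivariant formality of both $X$ and $Y$ under the $\lambda$-action. By Lemma~\ref{L:Taction}, $\lambda$ is regular with pairwise distinct weights on $\ms{A}_k$, so the fixed schemes $Z = X^\lambda$ and $Z' = Y^\lambda$ are reduced and finite. In particular
$$
H^*_{\C^*}(Z,\Q) \;=\; \bigoplus_{W\in Z} H^*_{\C^*}(\mt{pt},\Q) \;=\; \bigoplus_{W\in Z} \Q[v],
$$
and analogously for $Z'$. Applying Theorem~\ref{T:BGE} to each of the pairs $(X,Z)$ and $(Y,Z')$, the restriction maps $i^*_X$ and $i^*_Y$ become isomorphisms of $\Q[v,v^{-1}]$-modules after inverting the primitive character $v$; this is the content of the two displays in (\ref{A:Eq coh Hilb}).

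For the commuting square, the top horizontal arrow is the pullback along the $\lambda$-equivariant Gotzmann embedding $Y \hookrightarrow X$, while the vertical arrows are the specializations $v \mapsto 0$ described in Remark~\ref{R:quotient}. Commutativity is an immediate consequence of the functoriality of the Borel construction $E\C^* \times_{\C^*} (-)$.

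The substantive issue is the surjectivity of the two horizontal arrows. The bottom map $H^*(X,\C) \to H^*(Y,\C)$ is the restriction $i^*$ appearing in Figure~\ref{F:simplified}, whose surjectivity was established in Section~\ref{S:Carrell}. To promote this to the equivariant level I would invoke equivariant formality: both $X$ and $Y$ have vanishing odd cohomology (for $X$ by the Schubert decomposition, for $Y$ by the Bia{\l}ynicki-Birula decomposition furnished by $\lambda$ via Lemma~\ref{L:Taction}), so $H^*_{\C^*}(X,\Q)$ and $H^*_{\C^*}(Y,\Q)$ are free over $\Q[v]$ and recover the ordinary cohomology modulo $v$. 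A class $y \in H^*_{\C^*}(Y)$ can then be lifted by reducing mod $v$, lifting through the nonequivariant surjection to $H^*(X)$, lifting again to $H^*_{\C^*}(X)$ by freeness, and iterating on the error term via the graded Nakayama lemma. The main obstacle is precisely this verification of equivariant formality for $Y$ with respect to this particular $\lambda$; once that is in hand, the rest is formal and no further Chern class computation is required.
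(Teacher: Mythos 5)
Your proposal is correct and follows essentially the same route as the paper: the two displays come from Brion's localization (Theorem~\ref{T:BGE}) applied to the reduced finite fixed schemes, the square commutes by functoriality, and the equivariant surjectivity is deduced from the nonequivariant one --- your equivariant-formality-plus-graded-Nakayama argument is just a careful spelling out of the paper's one-line appeal to the Leray spectral sequence of the Borel fibration. The ``obstacle'' you flag is already in hand: the odd Betti numbers of $Y$ vanish by the Bia{\l}ynicki-Birula decomposition attached to $\lambda$ (Lemma~\ref{L:Taction} together with Subsection~\ref{SS:B2}), which is exactly what Section~\ref{S:Carrell} already uses.
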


\begin{proof}

The $T$-equivariant cohomology of a reduced zero dimensional scheme $Z=\{x_1,\dots, x_m\}$ is $H^*_T(Z) 
= \oplus_{x_i\in Z} H^*_T(pt)$ and furthermore $H^*_T(pt)= \Z[v,v^{-1}]$ for $T=\C^*$. Therefore, the first part 
of the theorem follows from Theorem~\ref{T:BGE}. The claim about the commutative diagram follows from the 
first part, Remark~\ref{R:quotient}, and from the discussion prior to Theorem~\ref{T:BGE}.
\end{proof}

\bibliography{References}
\bibliographystyle{plain}
\end{document}